\newtheorem{theorem}{Theorem} 
\newtheorem{lemma}{Lemma}
\newtheorem{corollary}{Corollary} 
\newtheorem{remark}{Remark}
\numberwithin{equation}{section}
\begin{document} 
\title{Diophantine approximation with prime denominator in quadratic number fields under GRH}

\author[S. Baier] {Stephan Baier}
\address{Stephan Baier\\ 
	Ramakrishna Mission Vivekananda Educational and Research Institute\\
	Department of Mathematics\\
	G.\ T.\ Road, PO~Belur Math, Howrah, West Bengal~711202\\
	India}
\email{stephanbaier2017@gmail.com}

\author[S. Das] {Sourav Das}
\address{Sourav Das\\
	Ramakrishna Mission Vivekananda Educational and Research Institute\\
	Department of Mathematics\\
	G.\ T.\ Road, PO~Belur Math, Howrah, West Bengal~711202\\
	India}
\email{sosourav007@gmail.com}

\author[E. A. Molla] {Esrafil Ali Molla}
\address{Esrafil Ali Molla\\
	Ramakrishna Mission Vivekananda Educational and Research Institute\\
	Department of Mathematics\\
	G.\ T.\ Road, PO~Belur Math, Howrah, West Bengal~711202\\
	India}
\email{esrafil.math@gmail.com}

\date{\today}

\subjclass[2020]{Primary: 11J71, 11R11, 11R42, 11R44; Secondary: 11N05, 11N13, 11J17}

\keywords{Distribution modulo one, distribution of prime ideals, Diophantine approximation, arithmetic progressions, Hecke $L$-functions, Grand Riemann Hypothesis}

\begin{abstract}
Matom\"aki proved that if $\alpha\in \mathbb{R}$ is irrational, then there are infinitely many primes $p$ such that $|\alpha-a/p|\le p^{-4/3+\varepsilon}$ for a suitable integer $a$. In this paper, we extend this result to all quadratic number fields under the condition that the Grand Riemann Hypothesis holds for their Hecke $L$-functions.
\end{abstract}

\maketitle

\tableofcontents

\section{Introduction and statements of the main results}
A fundamental result in Diophantine approximation is the following theorem due to Dirichlet. 
\begin{theorem} \label{Dirich}
Given any real irrational $\alpha$, there are infinitely many pairs $(a,q)\in \mathbb{Z}\times \mathbb{N}$ of relatively prime integers such that 
$$ \left| \alpha - \frac{a}{q} \right| < q^{-2}.$$
\end{theorem}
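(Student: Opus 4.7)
My plan is to prove Theorem~\ref{Dirich} using Dirichlet's classical pigeonhole argument, followed by a short separate step to upgrade ``infinitely many $(a,q)$'' from ``at least one good $(a,q)$ for each scale $N$''.

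First, I would fix a positive integer $N$ and consider the $N+1$ fractional parts $\{0\cdot\alpha\}, \{1\cdot\alpha\}, \ldots, \{N\cdot\alpha\}$ in $[0,1)$. Partitioning $[0,1)$ into $N$ half-open subintervals of length $1/N$, the pigeonhole principle supplies two indices $0\le q_1<q_2\le N$ whose fractional parts lie in the same subinterval. Writing $q:=q_2-q_1$ and $a:=\lfloor q_2\alpha\rfloor-\lfloor q_1\alpha\rfloor$, this yields $1\le q\le N$ and the key inequality $|q\alpha - a|<1/N$. Dividing by $q$ gives
\[
\left|\alpha-\frac{a}{q}\right|<\frac{1}{qN}\le\frac{1}{q^{2}}.
\]
If $\gcd(a,q)=d>1$, replacing $(a,q)$ by $(a/d,q/d)$ only sharpens the inequality while preserving coprimality, so we may assume $\gcd(a,q)=1$.

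Next, to pass from one solution to infinitely many, I would argue by contradiction. Suppose only finitely many coprime pairs $(a_{1},q_{1}),\dots,(a_{k},q_{k})$ satisfy the conclusion. Since $\alpha$ is irrational, each $|\alpha - a_{i}/q_{i}|$ is strictly positive, so there exists $N$ with
\[
\frac{1}{N}<\min_{1\le i\le k}\left|\alpha-\frac{a_{i}}{q_{i}}\right|.
\]
Applying the pigeonhole construction at this scale produces a new coprime pair $(a,q)$ with $|\alpha-a/q|<1/(qN)\le 1/N$, which is strictly closer to $\alpha$ than any of the $(a_{i},q_{i})$, contradicting the supposed finiteness. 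Hence infinitely many such pairs exist.

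The only conceptual step is the pigeonhole application; everything else is bookkeeping. The main subtlety to watch out for is the coprimality claim, since pigeonhole itself does not guarantee $\gcd(a,q)=1$; one must reduce the fraction and verify that reduction does not lose the inequality, which is immediate because the right-hand side $1/q^{2}$ only increases when $q$ decreases. The irrationality of $\alpha$ is used exclusively in the contradiction step, to ensure that each error $|\alpha-a_{i}/q_{i}|$ is positive so that a genuinely small upper bound $1/N$ can undercut them all.
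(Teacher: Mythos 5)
Your proof is correct: the pigeonhole construction, the reduction to lowest terms (noting that shrinking $q$ only weakens the required inequality), and the use of irrationality to force a new solution at a sufficiently fine scale are all handled properly. The paper itself states Theorem~\ref{Dirich} without proof, remarking only that it follows easily from the pigeonhole principle or continued fractions, so your argument is precisely the standard route the authors allude to.
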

While it is easy to prove the above statement just using the pigeonhole principle or the continued fraction expansion of $\alpha$, understanding Diophantine approximation by fractions with denominators restricted to subsets of the positive integers, such as the set of primes, is generally challenging. The problem of Diophantine approximation with primes has a rich history of research and triggered the development of several important tools in analytic number theory. In this context, the question becomes: For which $\theta > 0$ do there exist infinitely many prime numbers $p$ such that
\begin{equation*} 
\left|\alpha-\frac{b}{p}\right| < p^{-1-\theta+\varepsilon} 
\end{equation*} 
for a suitable $b\in \mathbb{Z}$? Equivalently, one may ask for which $\theta>0$ one can establish the infinitude of primes $p$ such that  
\begin{equation}\label{eq:DirichletApprox}
    ||p\alpha|| < p^{-\theta+\varepsilon},
\end{equation}
where $||x||$ is the distance of $x\in \mathbb{R}$ to the nearest integer.
In the following, we give a brief overview of the history of this problem.

In the thirties of the last century, I.\ M.\ Vinogradov \cite{vinogradov2004themethod} established the infinitude of primes $p$ satisfying \eqref{eq:DirichletApprox} with $\theta = 1/5$ by an intricate non-trivial treatment of trigometrical sums over primes, which also enabled him to prove the ternary Goldbach conjecture for sufficiently large odd integers.  Vaughan \cite{vaughan1978onthedistribution} improved the exponent to $\theta = 1/4$ by utilizing his celebrated identity for the von Mangoldt function and employing refined Fourier analytic arguments. Harman \cite{harman1983on-the-distribution} developed a new sieve method which enabled him to obtain the exponent $\theta=3/10$. Jia and Harman made several improvements, with Jia achieving the exponent $\theta=9/28$ in \cite{jia2000on-the-distribution} and Harman achieving $\theta=7/22$ in \cite{harman1996on-the-distribu}. Heath-Brown and Jia \cite{heath-brown2002the-distribution} introduced a new procedure to transform sieve functions into trigonometric sums and applied the Weil bound for Kloosterman sums to estimate certain trilinear sums, thus establishing the exponent  $16/49$, which is very close to $1/3$. Eventually, Matom\"aki \cite{matomaki2009the-distribution} unconditionally achieved the exponent $\theta=1/3$ by employing bounds for sums of Kloosterman sums. This is considered to be the limit of current technology. One might expect that $\theta=1-\varepsilon$ is an admissible exponent, but all known methods are far from establishing this, even conditionally under the Riemann Hypothesis. It was mentioned in \cite{heath-brown2002the-distribution} without proof that it is not difficult to obtain the exponent $\theta=1/3$, later established unconditionally by Matom\"aki, under the Riemann Hypothesis for Dirichlet $L$-functions.  This proof was carried out  in \cite{BaMo} by the first- and third-named authors. Throughout the sequel, we will abbreviate the Grand Riemann Hypothesis as GRH. 

Exploring analogues of this problem in number fields proves interesting, as novel ideas are needed to adapt classical methods to this context. The first-named author \cite{B2017} extended the classical problem in a slightly generalized form to $\mathbb{Q}(i)$, obtaining an exponent of  $\theta=1/12$. Introducing several novelties, Harman \cite{Harman2019} established the full analogue of his above-mentioned result with exponent $\theta=7/22$ for $\mathbb{Q}(i)$. Recently, utilizing Harman's sieve method, the first-named author, Mazumder and Technau established versions of these results for quadratic number fields of class number one in several papers (see \cite{BM}, \cite{BMT}, \cite{BT}). Ultimately, they achieved an analogue of Harman's exponent  $\theta=7/22$ in this specific context. In \cite{BaMo}, the first and third-named authors established an analogue of Matom\"aki's result ($\theta=1/3$) for the function field $k=\mathbb{F}_q(T)$ and its imaginary quadratic extensions by using the Riemann Hypothesis for Hecke $L$-functions of function fields, which is known to be true due to Weil's pioneering works. The arguments in \cite{BaMo} do not easily carry over to real quadratic extensions $K$ of $k$ without introducing new ideas. The main obstacle lies in the presence of an infinite group of units in the integral closure of $\mathbb{F}_q[T]$ in $K$ (see \cite{BaMo2} for a more detailed explanation). In \cite{BaMo2}, the authors used Vaughan's identity and exponential sums to obtain a result corresponding to Vaughan's exponent $\theta=1/4$ for real quadratic function fields.

In this article, we establish an analogue of Matom\"aki's result (exponent $\theta=1/3$) for all quadratic number fields under GRH for their Hecke $L$-functions. Our method is inspired by that in \cite{BaMo} in the case of imaginary quadratic fields. To handle the above-mentioned obstructions in the case of real quadratic fields, we use new ideas involving linear combinations of Hecke characters coming from the Fourier expansions of certain functions.  
Below are notations, used throughout this paper, followed by the statements of our main results for imaginary and real quadratic number fields. This section concludes with a brief description of our method.

\subsection{General notations}
\begin{itemize}
\item For a number field  $K$, we denote by $\mathcal{O}_K$ its ring of algebraic integers and by $\mathcal{O}_K^{\ast}$ the group of units in $\mathcal{O}_K$.
\item If $\mathfrak{a}$ is an ideal in $\mathcal{O}_K$, then we denote by $\mathcal{N}(\mathfrak{a})=[\mathcal{O}_K:\mathfrak{a}]$ its norm.
\item If $a$ is an element of $K$, then we denote by $\mathcal{N}(a)=|N_{K:\mathbb{Q}}(a)|$ the absolute value of its norm over $\mathbb{Q}$. We note that $\mathcal{N}(a)=\mathcal{N}(\mathfrak{a})$ if $\mathfrak{a}=(a)$ is a principal ideal in $\mathcal{O}_K$.    
\item Following usual convention, we will denote by $\varepsilon$ an arbitrarily small but fixed positive constant.
\item All $O$-constants in this paper are allowed to depend on $K$ and $\varepsilon$.
\end{itemize}

\subsection{Imaginary quadratic number fields} In this case, we shall establish the following. 

\begin{theorem}\label{firstmainresult}
Let $K$ be an imaginary quadratic number field and $\varepsilon$ be any positive real number. Assume that GRH holds for all Hecke $L$-functions of $K$. Then for any $\alpha\in \mathbb{C}\setminus K$, there exist infinitely many non-zero principal prime ideals $\mathfrak{p}$ in $\mathcal{O}_K$ such that 
$$
\left|\alpha - \frac{b}{p}\right|\le \mathcal{N}(\mathfrak{p})^{-2/3+\varepsilon}
$$ 
for some generator $p$ of $\mathfrak{p}$ and $b\in \mathcal{O}_K$.  
 \end{theorem}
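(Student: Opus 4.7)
The plan is to adapt the Fourier-analytic framework of \cite{BaMo} (which treated the imaginary quadratic function-field analogue via the Weil-type GRH for function-field Hecke $L$-functions) to the number-field setting, feeding the resulting exponential sums over prime ideals into GRH for the Hecke $L$-functions of $K$. Fix a fundamental sector $\mathcal{F}\subset\mathbb{C}\setminus\{0\}$ for the finite group $\mathcal{O}_K^{\ast}$, so that each non-zero principal ideal has a unique generator in $\mathcal{F}$. Set $\delta:=X^{-1/6+\varepsilon}$, let $\phi:\mathbb{C}\to[0,\infty)$ be a smooth bump supported in the disk of radius $\delta$ with $\phi(0)>0$ and $\int\phi\asymp\delta^2$, and periodize $\Phi(z):=\sum_{\mu\in\mathcal{O}_K}\phi(z-\mu)$. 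Consider
$$T(X):=\sum_{\substack{p\in\mathcal{F},\,(p)\text{ prime}\\ X<\mathcal{N}(p)\le 2X}}\Phi(\alpha p).$$
If $T(X)>0$, then some $p$ in the prescribed range satisfies $|\alpha p-b|\le\delta$ for some $b\in\mathcal{O}_K$, i.e.\ $|\alpha-b/p|\le\mathcal{N}(p)^{-2/3+\varepsilon}$, proving the theorem. Poisson summation on the torus $\mathbb{C}/\mathcal{O}_K$ expands $\Phi(z)=c_K\sum_{\xi\in\mathcal{O}_K^{\vee}}\widehat\phi(\xi)\,e(\mathrm{Tr}_{K/\mathbb{Q}}(\xi z))$ along the inverse different; the $\xi=0$ contribution is the main term $\widehat\phi(0)\cdot\#\{\text{principal primes}\}\asymp X^{2/3+\varepsilon}/\log X$ by the prime ideal theorem, while the non-zero frequencies are essentially confined to $0<|\xi|\ll\delta^{-1}X^{\varepsilon}$ by the rapid decay of $\widehat\phi$.

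To handle these, for each relevant $\xi$ one must bound
$$S(\xi):=\sum_{\substack{p\in\mathcal{F},\,(p)\text{ prime}\\ X<\mathcal{N}(p)\le 2X}}e(\mathrm{Tr}(\xi\alpha p)).$$
Apply two-dimensional Dirichlet approximation in $\mathcal{O}_K$ to $\beta:=\xi\alpha$, producing coprime $a,q\in\mathcal{O}_K$ with $\mathcal{N}(q)\le Q$ and $|q\beta-a|\le Q^{-1/2}$ for a parameter $Q$ to be optimized. Factor $e(\mathrm{Tr}(\beta p))=e(\mathrm{Tr}(ap/q))\,e(\mathrm{Tr}((\beta-a/q)p))$: the first factor depends only on $p\bmod q$, while the second is smooth on the relevant scale. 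Decompose the indicator ``$p\in\mathcal{F}$, $p\equiv b\pmod q$, $(p)$ principal'' into Hecke characters of $K$---class-group characters select the principal class, ray class characters modulo $q$ select the residue class, and the finitely many angular characters $(z/|z|)^{k}$ with $k\in(2\pi/w_K)\mathbb{Z}$ select the sector $\mathcal{F}$. This expresses $S(\xi)$ as a weighted linear combination of Hecke character sums, to which GRH applies: the explicit formula gives
$$\sum_{\mathcal{N}(\mathfrak{p})\le Y}\chi(\mathfrak{p})\log\mathcal{N}(\mathfrak{p})=\mathbf{1}_{\chi=\chi_0}\,Y+O\bigl((1+|k|)Y^{1/2}\log^{2}(Y\mathcal{N}(q))\bigr),$$
which, combined with partial summation on the smooth factor and a classical Gauss-sum bound $\bigl|\sum_{b}\chi(b)e(\mathrm{Tr}(ab/q))\bigr|\ll\mathcal{N}(q)^{1/2+\varepsilon}$, yields a pointwise bound for $|S(\xi)|$ depending on $\mathcal{N}(q)$ and $|k|$.

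\emph{Main obstacle.} The principal difficulty is that no pointwise bound on a single $S(\xi)$ of the form $X^{2/3+\varepsilon}$ is available uniformly: the Dirichlet denominator $\mathcal{N}(q)$ attached to $\xi\alpha$ can land in a regime where either the Gauss-sum term or the partial-summation term is wasteful. To reduce the total error in $T(X)$ below the main term $X^{2/3+\varepsilon}/\log X$, one has to partition the $\xi$-sum into dyadic ranges of $\mathcal{N}(q)$, use a lattice-counting argument in $\mathcal{O}_K^{\vee}$ to bound the number of $\xi$ with approximant of that size, and combine this with the Parseval bound $\sum_\xi|\widehat\phi(\xi)|^{2}\asymp\delta^{2}$---precisely the major/minor arc dichotomy of the circle method, now played out in the lattice $\mathcal{O}_K$. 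A secondary point is that the angular decomposition of $\mathcal{F}$ is a \emph{finite} Fourier sum exactly because $\mathcal{O}_K^{\ast}$ is finite; this is what makes the imaginary quadratic case cleaner than the real quadratic case treated in the companion theorem, where continuous families of Größencharaktere must be invoked.
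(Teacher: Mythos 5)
Your route (periodized bump, Poisson summation on $\mathbb{C}/\mathcal{O}_K$, exponential sums $S(\xi)$ over prime elements, circle-method dichotomy) is genuinely different from the paper's, but it is not a complete proof, and the gap is essential rather than technical. The most serious problem is that you leave the scale $X$ arbitrary and give no mechanism for selecting the $X$'s along which $T(X)>0$ is to be proved. In fact $T(X)>0$ is false for general large $X$: take $K=\mathbb{Q}(i)$, say, and a Liouville-type $\alpha\in\mathbb{C}\setminus K$ admitting an approximation $|\alpha-a/q|\le \mathcal{N}(q)^{-M}$ with $(a,q)$ coprime and $M$ huge. For any $X$ with $\mathcal{N}(q)^{4}\le X\le \mathcal{N}(q)^{M/2}$ and any prime element $p$ with $X<\mathcal{N}(p)\le 2X$ one has $||p\alpha||_K\ge ||pa/q||_K-|p|\,|\alpha-a/q|\ge \tfrac12\mathcal{N}(q)^{-1/2}>X^{-1/6+\varepsilon}=\delta$ (since $q\nmid pa$, the nearest lattice point to $pa/q$ is at distance at least $1/|q|$), so $T(X)=0$ throughout that range. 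Consequently no amount of major/minor-arc refinement of the harmonic analysis, even under GRH, can close your argument as stated: the scale must be chosen along a sequence dictated by the Diophantine approximations of $\alpha$ itself, with $\mathcal{N}(q)\asymp X^{2/3}$, which is exactly the role of Theorems \ref{Dirichlet2} and \ref{Dirichlet3} in the paper and is absent from your write-up. Relatedly, applying Dirichlet approximation separately to each frequency $\beta=\xi\alpha$ gives no uniform control on the denominators $q(\xi)$: for some $\xi$ they can be bounded, $S(\xi)$ then has essentially no cancellation, and its weighted contribution $\widehat{\phi}(\xi)S(\xi)\asymp \delta^2 X/\log X$ matches your main term with uncontrolled phase. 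All the relevant frequencies have to inherit their approximations from the single $a/q$ attached to $\alpha$. You yourself label precisely this missing analysis the ``main obstacle,'' so by your own account the decisive estimate is never established; a smaller inaccuracy is that cutting out the sector $\mathcal{F}$ requires the full (infinite) family of angular characters, not finitely many, and hence its own truncation argument.

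For comparison, the paper avoids exponential sums over primes entirely in the imaginary quadratic case. With $a/q$ the Dirichlet approximant, $\delta=N^{-1/3+\varepsilon}$ and $\mathcal{N}(p)\asymp N^2$, the condition $||p\alpha||_K\le\delta$ is implied by the congruence $ap\equiv k\bmod q$ with $0<|k|\le |q|\delta/2$ and $k$ coprime to $q$ (in the ideal sense when $h_K>1$). This is detected by Dirichlet characters modulo $q$; only characters trivial on $\mathcal{O}_K^{\ast}$ survive summation over associates, so the corresponding Hecke characters have trivial infinity type and Corollary \ref{GRHcharsum} evaluates the prime-ideal sums under GRH, while the character sums over $k$ are controlled by Cauchy--Schwarz and orthogonality, and class group characters isolate principal prime ideals when $h_K>1$. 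Choosing $\mathcal{N}(q)\asymp N^{4/3-\varepsilon}$ makes the main term dominate. If you wish to salvage the Poisson-summation framework, you would need to import these same two ingredients: the scale tied to the approximation denominator of $\alpha$, and all frequencies $\xi\alpha$ treated through that single $a/q$.
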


The above Theorem \ref{firstmainresult} should be compared to the following version of the Dirichlet approximation theorem for imaginary quadratic number fields, which we will use in our proof of Theorem \ref{firstmainresult}. 

\begin{theorem} \label{Dirichlet2} 
Let $K$ be an imaginary quadratic number field. Then there exists a positive constant $C$ depending at most on $K$ such that for every $\alpha\in \mathbb{C}\setminus K$, there exist infinitely many elements $a/q\in K$ with $(a,q)\in\mathcal{O}_K\times(\mathcal{O}_K\setminus\{0 \})$ such that 
\begin{equation*}
    \left|\alpha-\frac{a}{q}\right| \leq \frac{C}{\mathcal{N}(q)}.
\end{equation*}
\end{theorem}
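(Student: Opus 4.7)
The plan is to adapt Dirichlet's classical pigeonhole argument to the two-dimensional setting in which $\mathcal{O}_K$ naturally lives. Since $K$ is imaginary quadratic, fixing an embedding $K\hookrightarrow\mathbb{C}$ identifies $\mathcal{O}_K$ with a full-rank lattice in $\mathbb{C}$ (viewed as $\mathbb{R}^2$), with a fundamental parallelogram $F$ of area $A_K$ depending only on $K$. This is the key structural feature that makes the argument work in the imaginary quadratic case, where $\mathcal{O}_K^*$ is finite and the embedding has no real place to cause collapse.

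First I would fix a large parameter $Q>0$ and consider the set $S_Q:=\{q\in\mathcal{O}_K:|q|\le Q\}$. A standard lattice-counting estimate gives $|S_Q|\ge c_K Q^2$ for some positive constant $c_K$ depending only on $K$, once $Q$ is sufficiently large. I would then partition $F$ into cells of diameter at most $\sqrt{2}\,\delta$ for a parameter $\delta>0$, yielding at most $A_K/\delta^2+O(1/\delta)$ cells. Reducing each $q\alpha$ modulo the lattice $\mathcal{O}_K$ into $F$ and applying the pigeonhole principle with $\delta$ chosen so that $c_K Q^2$ strictly exceeds the number of cells, two distinct elements $q_1,q_2\in S_Q$ must land in a common cell. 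Setting $q:=q_2-q_1$, this yields $|q\alpha-a|<\sqrt{2}\,\delta$ for some $a\in\mathcal{O}_K$, with $q\ne 0$ and $|q|\le 2Q$.

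Balancing the parameters by taking $\delta$ proportional to $1/Q$ (precisely, $\delta Q$ just large enough to force the pigeonhole) gives $|q\alpha-a|\le C'/|q|$, whence $|\alpha-a/q|\le C'/|q|^2=C/\mathcal{N}(q)$ for a constant $C$ depending only on $K$, using $\mathcal{N}(q)=|q|^2$. To pass from one such pair $(a,q)$ to infinitely many, I would invoke the hypothesis $\alpha\notin K$: for any finite collection of pairs $(a_j,q_j)$ already produced, the minimum $\min_j|q_j\alpha-a_j|$ is strictly positive, and choosing $Q$ in the next iteration so large that $\sqrt{2}\,\delta$ is smaller than this minimum forces the newly constructed pair to be distinct from all previous ones.

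The argument is essentially routine. The one point that must be tracked carefully is that both the lattice-point lower bound $|S_Q|\gg Q^2$ and the area $A_K$ depend only on $K$, which guarantees that the final constant $C$ likewise depends only on $K$ and not on $\alpha$. I do not anticipate any serious obstacle: the rank-two embedding of $\mathcal{O}_K$ into $\mathbb{R}^2$, a feature specific to imaginary quadratic fields, renders the adaptation of Dirichlet's pigeonhole argument entirely direct. (By contrast, in the real quadratic case a substitute for this step requires geometry-of-numbers input over both infinite places, which is the reason the authors restrict Theorem \ref{Dirichlet2} to the imaginary quadratic setting.)
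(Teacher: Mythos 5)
Your argument is correct, but it is not the route the paper takes: the paper gives no proof of Theorem \ref{Dirichlet2} at all, simply citing section~2 of Gintner's 1936 thesis, where the statement is established (with a concrete constant $C$) via continued-fraction-type expansions for complex numbers. Your two-dimensional pigeonhole argument --- identifying $\mathcal{O}_K$ with a rank-two lattice in $\mathbb{C}$, reducing the points $q\alpha$ with $|q|\le Q$ into a fundamental parallelogram, subdividing it into $\asymp_K Q^2$ cells, and taking the difference of two elements falling in the same cell --- is the natural generalization of Dirichlet's original proof. It is self-contained, needs only the lattice-point lower bound $\#\{q\in\mathcal{O}_K:|q|\le Q\}\gg_K Q^2$ together with the finiteness of the covolume, and makes transparent why the imaginary quadratic case requires no further input (as you note, this is precisely what breaks for real quadratic $K$, where the paper instead cites Qu\'eme's general approximation theorem for Theorem \ref{Dirichlet1}). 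What the citation buys the authors is brevity and an explicit constant; what your argument buys is independence from the literature, at the cost of an unspecified $C=C(K)$ --- which is all the paper ever uses.

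One small point to tighten: the theorem asks for infinitely many distinct elements $a/q\in K$, whereas your iteration only guarantees that each newly constructed \emph{pair} $(a,q)$ differs from the previous ones; a new pair could a priori represent an old fraction. The fix is immediate: for $q\in\mathcal{O}_K\setminus\{0\}$ one has $\mathcal{N}(q)\ge 1$, hence $|q|\ge 1$, so for any fixed fraction $\beta=a/q$ every representing pair satisfies $|q\alpha-a|=|q|\,|\alpha-\beta|\ge|\alpha-\beta|>0$, since $\alpha\notin K$. Because your construction drives $|q\alpha-a|$ below any prescribed positive bound, only finitely many of the produced pairs can represent any given fraction, and infinitely many distinct fractions follow.
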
 

\begin{proof}
This was established in \cite[section 2]{Gintner1936thesis} with a concrete constant $C$.
\end{proof}

\subsection{Real quadratic number fields} Now let $K=\mathbb{Q}(\sqrt{d})$ be a real quadratic number field, where $d>1$ is a square-free integer. In this context we use the following notations.
\begin{itemize}
\item We denote the two embeddings of $K$, the identity and conjugation, by 
$$ 
\sigma_1(s+t\sqrt{d}):=s+t\sqrt{d} 
$$ 
and 
$$ 
\sigma_2(s+t\sqrt{d}):=s-t\sqrt{d}, 
$$ 
where $s,t\in \mathbb{Q}$. 
\item We write 
\begin{align*} 
\sigma(K):=\{(\sigma_1(x),\sigma_2(x)) : x\in K\}. 
\end{align*} 
\end{itemize}

We shall establish the following. 

\begin{theorem} \label{secondmainresult}
Let $K$ be a real quadratic number field and $\varepsilon$ be any positive real number. Assume that GRH holds for all Hecke $L$-functions of $K$. Then for any pair $(x_1, x_2) \in \mathbb{R}^2\setminus \sigma(K)$, there exist infinitely many non-zero principal prime ideals $\mathfrak{p}$ in $\mathcal{O}_K$ such that 
$$ 
\left| x_i-\frac{\sigma_i(b)}{\sigma_i(p)}\right| \le \mathcal{N}(\mathfrak{p})^{-2/3 +\varepsilon} \quad \mbox{for } i=1,2 
$$ 
for some generator $p$ of $\mathfrak{p}$ and $b\in \mathcal{O}_K$. 
\end{theorem}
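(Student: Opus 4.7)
The plan is to adapt the GRH-based argument used in the imaginary quadratic case, i.e.\ the proof of Theorem \ref{firstmainresult} given in \cite{BaMo}, to real quadratic $K=\mathbb{Q}(\sqrt{d})$. The essential new difficulty is that the unit group $\mathcal{O}_K^{\ast}=\{\pm\epsilon^k:k\in\mathbb{Z}\}$ is infinite, so a principal prime ideal has infinitely many generators and the relevant two-dimensional approximation problem has no canonical generator to single out.

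First I would establish a real-quadratic analogue of Theorem \ref{Dirichlet2} by applying Minkowski's convex body theorem to an appropriate lattice in $\mathbb{R}^4$, producing, for each large $Q$, elements $a,q\in\mathcal{O}_K$ with $q\neq 0$, $\mathcal{N}(q)\le Q$, and
\[
\left|x_i-\sigma_i(a)/\sigma_i(q)\right|\ll Q^{-1}\qquad(i=1,2);
\]
by multiplying $a$ and $q$ by a common power of $\epsilon$ (which preserves the fraction $a/q$) we may further arrange $|\sigma_i(q)|\asymp\sqrt{Q}$. With this approximation in hand, Theorem \ref{secondmainresult} reduces, on choosing a free parameter $P\asymp Q^{3/2+\varepsilon}$, to exhibiting a prime ideal $\mathfrak{p}$ with $\mathcal{N}(\mathfrak{p})\asymp P$ admitting a generator $p$ such that $|\sigma_i(p)-m\sigma_i(q)|\le \delta$ for some positive integer $m\asymp\sqrt{P/Q}$ and both $i=1,2$, with $\delta$ of order $(Q\log P/P)^{1/2}$. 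A direct calculation then shows that $b:=ma\in\mathcal{O}_K$ satisfies $|x_i-\sigma_i(b)/\sigma_i(p)|\ll\mathcal{N}(\mathfrak{p})^{-2/3+\varepsilon}$.

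The main obstacle is counting these prime ideals. Because the box condition on $(\sigma_1(p),\sigma_2(p))$ is not $\mathcal{O}_K^{\ast}$-invariant, it cannot be directly expressed as a condition on the ideal $\mathfrak{p}$. Following the hint in the introduction, I would smooth the indicator of the box and Fourier-expand it along the logarithmic coordinate $u=\log|\sigma_1(p)/\sigma_2(p)|$, which is $2\log\epsilon$-periodic modulo the unit action. The Fourier coefficients are then indexed by the infinite-order Hecke Grössencharacters
\[
\chi_n(\mathfrak{p})=\mathrm{sgn}(\sigma_1(p))^{\nu_1}\mathrm{sgn}(\sigma_2(p))^{\nu_2}\bigl|\sigma_1(p)/\sigma_2(p)\bigr|^{i\pi n/\log\epsilon},\quad n\in\mathbb{Z},\;\nu_1,\nu_2\in\{0,1\},
\]
which are trivial on $\mathcal{O}_K^{\ast}$, and the count transforms into a weighted sum of $\sum_{\mathfrak{p}}\Lambda(\mathfrak{p})\chi_n(\mathfrak{p})\Phi(\mathcal{N}\mathfrak{p})$ with rapidly decaying Fourier weights, together with an outer sum expressing the radial constraint on $\mathcal{N}\mathfrak{p}$.

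Finally, GRH for $L(s,\chi_n)$ yields via the explicit formula a bound $O\bigl(P^{1/2+\varepsilon}(1+|n|)^{A}\bigr)$ on each character sum, whereas the $n=0$ term is handled by $\zeta_K$ and supplies the main term of size $\gg 1$. Truncating the Fourier expansion at $|n|\le P^\varepsilon$ keeps the total error below the main term for $P$ chosen as above, and letting $Q\to\infty$ along the sequence of approximations from the first step produces infinitely many prime ideals with the desired property. The delicate point is the truncation step: the rapid decay of the Fourier coefficients of the smoothed indicator must be reconciled with the polynomial growth of the GRH bound in the character parameter $n$, which is precisely the real-quadratic analogue of the calibration performed in \cite{BaMo}.
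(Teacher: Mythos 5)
Your overall toolkit (Dirichlet approximation, reduction to a prime-counting problem, Fourier expansion in the coordinate $\log|\sigma_1(p)/\sigma_2(p)|$ to produce infinite-order Hecke characters, GRH bounds for the resulting $L$-functions) is in the same spirit as the paper, but your reduction step is quantitatively fatal. You require the generator $p$ to satisfy $|\sigma_i(p)-m\sigma_i(q)|\le\delta$ for a rational integer $m$, i.e.\ you force $b=ma$. Count what this costs: with $\mathcal{N}(q)\asymp Q$, $\mathcal{N}(p)\asymp P\asymp Q^{3/2+\varepsilon}$ and your $\delta\asymp (Q\log P/P)^{1/2}\asymp P^{-1/6}(\log P)^{1/2}$, the admissible $p$ lie in a union of $\asymp\sqrt{P/Q}$ boxes of side $\asymp\delta$ in the plane $(\sigma_1,\sigma_2)$, of total area $\asymp\sqrt{Q/P}\,\log P\to 0$. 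Since $\sigma(\mathcal{O}_K)$ is a lattice of bounded covolume, the expected number of algebraic integers — let alone prime elements — in this region is $o(1)$; no choice of smoothing, truncation, or GRH input can produce a main term "$\gg 1$" from the $n=0$ term, because the main term is the (normalized) measure of the target region and that measure vanishes. The correct reduction must allow \emph{arbitrary} $b\in\mathcal{O}_K$: multiplying $\bigl|\sigma_i(a)/\sigma_i(q)-\sigma_i(b)/\sigma_i(p)\bigr|\le\Delta$ through by $|\sigma_i(p)\sigma_i(q)|$ turns the condition into $|\sigma_i(ap-bq)|\le Y\asymp(\mathcal{N}(p)/\mathcal{N}(q))^{1/2}$, i.e.\ $ap\equiv k\bmod q$ for \emph{some} $k$ with both conjugates $\ll Y$ — a union of $\asymp \mathcal{N}(p)/\mathcal{N}(q)$ residue classes, giving an expected count $\asymp N^{2/3}$. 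By restricting $b$ to integer multiples of $a$ you have discarded all but a density-zero subfamily of these classes.

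Relatedly, the role of the Fourier expansion differs from what you propose. In the paper it is not used to detect a geometric box for $p$; the residue classes are detected by Dirichlet characters $\chi\bmod q$, and the expansion in the characters $\psi^n(p)=|\sigma_1(p)/\sigma_2(p)|^{i\pi n/\log\epsilon}$ is used to approximate the non-multiplicative infinity parts $\overline{\chi}_\infty(p)$ that arise when converting $\chi(p)$ into Hecke characters, with a spacing lemma for the quantities $\log|\sigma_1(p)/\sigma_2(p)|$ controlling the truncation error at $W=N^{1/2}$ (your scheme of detecting a box of side $P^{-1/6}$ directly by harmonics would require far more characters than the GRH error term $O(P^{1/2+\varepsilon})$ per character can afford). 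Finally, you do not address the case $h_K>1$ at all: one must pick out principal prime ideals (class group characters), and the approximating pair $(a,q)$ need not have coprime ideals, which forces working modulo $\mathfrak{q}=\gcd((a),(q))^{-1}(q)$ together with a sharpened approximation theorem guaranteeing $\mathcal{N}(\mathfrak{q})\to\infty$, as in Theorem \ref{Dirichlet4}.
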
 

The above Theorem \ref{secondmainresult} should be compared to the following version of the Dirichlet approximation theorem for real quadratic number fields, which we will use in our proof of Theorem \ref{secondmainresult}. 

\begin{theorem} \label{Dirichlet1} 
Let $K$ be a real quadratic number field. Then there exists a positive constant $C$ depending at most on $K$ such that for every pair $(x_1, x_2) \in \mathbb{R}^2 \setminus \sigma(K)$, there exist infinitely many elements $a/q\in K$ with $(a,q)\in\mathcal{O}_K\times(\mathcal{O}_K\setminus\{0 \})$ such that  
\begin{equation*} 
\left|x_i-\frac{\sigma_i(a)}{\sigma_i(q)}\right| \le \frac{C}{\mathcal{N}(q)} \quad \mbox{ for } i=1,2. 
\end{equation*} 
\end{theorem}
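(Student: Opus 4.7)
The plan is to establish Theorem \ref{Dirichlet1} by a standard geometry-of-numbers argument, combined with a counting step to handle two degenerate cases. Via $\sigma = (\sigma_1, \sigma_2)$, the ring $\mathcal{O}_K$ embeds as a lattice of covolume $\sqrt{|d_K|}$ in $\mathbb{R}^2$, where $d_K$ denotes the discriminant of $K$, so $\Lambda := \sigma(\mathcal{O}_K) \oplus \sigma(\mathcal{O}_K)$ is a rank-$4$ lattice of covolume $|d_K|$ in $\mathbb{R}^4$, and the unimodular shear
\[
T(y_1, y_2, z_1, z_2) := (y_1 - x_1 z_1,\, y_2 - x_2 z_2,\, z_1,\, z_2)
\]
preserves covolume. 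For each $Q > \sqrt{|d_K|}$, I apply Minkowski's first theorem to the symmetric convex box
\[
B_Q := [-\sqrt{|d_K|}/Q,\, \sqrt{|d_K|}/Q]^2 \times [-Q, Q]^2
\]
of volume $16|d_K| = 2^4 \operatorname{covol}(T(\Lambda))$. This yields a nonzero $(a, q) \in \mathcal{O}_K^2$ with $|\sigma_i(a) - x_i \sigma_i(q)| \le \sqrt{|d_K|}/Q$ and $|\sigma_i(q)| \le Q$ for $i = 1, 2$. If $q = 0$, then $\mathcal{N}(a) \le |d_K|/Q^2 < 1$, forcing $a = 0$, a contradiction, so $q \ne 0$. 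Dividing by $\sigma_i(q)$ and using $|\sigma_j(q)| \le Q$ for $j \ne i$ yields
\[
\left|x_i - \frac{\sigma_i(a)}{\sigma_i(q)}\right| \le \frac{\sqrt{|d_K|}}{Q\, |\sigma_i(q)|} \le \frac{\sqrt{|d_K|}}{\mathcal{N}(q)},
\]
proving the required inequality for one approximant $a/q$.

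To produce infinitely many distinct $a/q \in K$, I argue by contradiction and assume the set $S$ of valid approximants is finite. The hypothesis $(x_1, x_2) \notin \sigma(K)$ together with the injectivity of each $\sigma_i$ on $K$ lets me split $S = S_{\text{bad}} \cup S_{\text{good}}$, where $S_{\text{bad}} := \{y \in S : \sigma_i(y) = x_i \text{ for some } i\}$ contains at most two elements, namely $x_1$ (when $x_1 \in K$) and $\sigma_2(x_2)$ (when $x_2 \in K$). Enlarging the Minkowski box to $C_1 B_Q$ for a large constant $C_1$, the lattice-point count in $T(\Lambda) \cap C_1 B_Q$ is $\asymp C_1^4$, whereas the points with $a/q \in S_{\text{bad}}$ lie on at most two rank-$2$ sublattices of $T(\Lambda)$ and number only $O(C_1^2)$. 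For $C_1$ sufficiently large there is therefore a nonzero lattice point with $a/q \notin S_{\text{bad}}$. If moreover $a/q \in S_{\text{good}}$, then $c_i := |x_i - \sigma_i(a/q)| > 0$ for both $i$, and the Minkowski bound $|\sigma_i(q)|c_i \le C_1 \sqrt{|d_K|}/Q$ forces $\mathcal{N}(q) \le C_1^2 |d_K|/(Q^2 c_1 c_2)$; taking $Q$ so large that this is less than $1$ contradicts $\mathcal{N}(q) \ge 1$. Hence $a/q \notin S$, contradicting the finiteness of $S$. The final constant in the theorem may be taken as $C = C_1^2 \sqrt{|d_K|}$.

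The main obstacle is precisely the infinitely-many step: a single application of Minkowski can fail to produce infinitely many distinct elements of $K$ in the degenerate cases where one of $x_1, x_2$ happens to lie in $K$, since the argument may repeatedly return the unique $y \in K$ matching that coordinate. The remedy of enlarging the box and counting lattice points against the at-most-two rank-$2$ degenerate sublattices is the only non-routine part of the proof.
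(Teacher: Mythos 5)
Your opening step is fine and is, incidentally, more than the paper itself provides: the paper does not prove this theorem but cites Qu\'eme's general Dirichlet approximation theorem for number fields, so any complete argument here is necessarily a different route. Applying Minkowski's theorem to the sheared lattice $T(\Lambda)$ and the compact box $B_Q$ does yield, for every $Q>\sqrt{|d_K|}$, a pair $(a,q)$ with $q\neq 0$ and $\left|x_i-\sigma_i(a)/\sigma_i(q)\right|\le \sqrt{|d_K|}/\mathcal{N}(q)$, and your treatment of the non-degenerate approximants (both $c_i>0$ forces $\mathcal{N}(q)<1$ once $Q$ is large) is correct.

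The genuine gap is in the counting step that is supposed to dispose of $S_{\text{bad}}$, and it is fatal for the statement as formulated, which demands a constant $C$ depending only on $K$. The claim that the degenerate points number $O(C_1^2)$ with an implied constant depending only on $K$ is false: rank-$2$-ness alone gives nothing uniform in $Q$ for these highly eccentric boxes, and even after using the correct arithmetic input (a degenerate point has $a=y_0q$ with $\mathcal{N}(q)\ge 1$, so the box forces $\mathcal{N}(q)\ll C_1^2/c$, where $c$ is the distance from the relevant $x_j$ to the corresponding conjugate of $y_0$), the count is genuinely of size about $C_1^2/c$, and $c$ can be arbitrarily small as $(x_1,x_2)$ varies. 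Concretely, take $x_1=0\in K$ and $x_2$ irrational with $0<|x_2|<\bigl(2C_1^2\sqrt{|d_K|}\bigr)^{-1}$. If $(a,q)$ is a nonzero point of $T(\Lambda)\cap C_1B_Q$ with $a\neq 0$, then $|\sigma_1(a)|\le C_1\sqrt{|d_K|}/Q$ and $\mathcal{N}(a)\ge 1$ give $|\sigma_2(a)|\ge Q/(C_1\sqrt{|d_K|})$, whence $|x_2|\,|\sigma_2(q)|\ge Q/(2C_1\sqrt{|d_K|})$ for large $Q$ and so $|\sigma_2(q)|>C_1Q$, contradicting the box constraint. Thus for all large $Q$ every nonzero lattice point in $C_1B_Q$ has $a=0$, i.e.\ ratio equal to the bad element $x_1$; the comparison ``$\asymp C_1^4$ versus $O(C_1^2)$'' simply fails, and no choice of $Q$ repairs it. To salvage the argument you must take $C_1\gg |x_2|^{-1/2}$, so the final constant $C=C_1^2\sqrt{|d_K|}$ depends on the pair $(x_1,x_2)$ in the degenerate cases. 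A pair-dependent constant would in fact suffice for the application in the proof of Theorem \ref{secondmainresult}, where $(x_1,x_2)$ is fixed, but it does not prove the theorem as stated; for the uniform statement you need either Qu\'eme's argument or a separate construction (not an enlarged Minkowski box) handling the cases $x_1\in K$ or $x_2\in K$.
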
 

\begin{proof}
This follows from a general version of the Dirichlet approximation theorem for number fields in  [\cite{Que}, Theorem 1].
\end{proof}

\begin{remark} {\rm For comparison, the unconditional main results in \cite{BMT} together with the considerations in \cite[section 12]{BaMo2} imply an exponent of $-\nu+\varepsilon$  with 
\begin{equation}\label{otherexponent}
\nu=1/2+7/44=\frac{29}{44}=0.65\overline{90}
\end{equation} 
in place of $-2/3+\varepsilon$
in both Theorems \ref{firstmainresult} and \ref{secondmainresult}, which is a weaker result since $\nu<2/3$.
In fact, in the said section 12 of \cite{BaMo2} it was pointed out that arguments from this paper can be incorporated in the treatment in \cite{BM} to obtain an exponent of $-\nu+\varepsilon$ with $\nu=1/2+1/8=5/8=0.625$ for the real quadratic case, which is an even weaker result. However, these arguments can also be incorporated in the treatment in \cite{BMT} to obtain an exponent of $-\nu+\varepsilon$ with $\nu$ as in \eqref{otherexponent} in this case.} \end{remark}
 
\begin{remark} {\rm It is to be noted that the above Theorems \ref{Dirichlet2} and \ref{Dirichlet1} do not assume $a$ and $q$ to be coprime (in the sense that the ideals generated by $a$ and $q$ are coprime). Though, if $K$ has class number one, then unique factorisation allows us to cancel any potential non-trivial common factors and assume $a$ and $q$ to be coprime. If the class number is greater than 1, then it may happen that $a$ and $q$ don't share common irreducible factors, but the ideals generated by them share non-trivial ideal factors (see the considerations at the beginning of subsection \ref{arb} below).} 
\end{remark}

\subsection{Description of our method} To establish our main results in Theorems \ref{firstmainresult} and \ref{secondmainresult} for imaginary and real quadratic number fields, we first approximate $\alpha$ as well as $(x_1,x_2)$ using Theorems \ref{Dirichlet2} and \ref{Dirichlet1}, respectively. This is followed by a reduction of our problem to counting prime elements in unions of certain residue classes. Their membership of residue classes is then detected using Dirichlet characters. The main contribution comes from the principal Dirichlet character to the relevant modulus. To evaluate this main term, we count prime elements with norm restricted by a suitable quantity. To bound the error term, we need to estimate sums of Dirichlet characters of prime elements. We relate them to sums of Hecke characters of principal prime ideals. This is rather easy for imaginary quadratic fields but requires novel ideas for real quadratic fields. In the latter case, we approximate the said sums by linear combinations of sums of Hecke characters with coefficients arising from the Fourier expansion of a certain function. In both the cases of imaginary and real quadratic fields, we first work out the easier case of class number one and then describe the required modifications needed to cover general class numbers.
Theorems \ref{firstmainresult} and \ref{secondmainresult} should be extendable to prime ideals in a given class of the class group, but for simplicity, we have confined ourselves to considering prime ideals in the class of principal ideals. For this more general setup, one may fix a class $C$ and an integral ideal $\mathfrak{a}_0$ in the class $C^{-1}$. Now one may consider Diophantine approximation with denominators generating a principal ideal of the form $\mathfrak{p}\mathfrak{a}_0$, where $\mathfrak{p}$ is a prime ideal in $C$. 

It is likely that our method can be extended to higher degree number fields.  The general question is, for some parameter $N$ tending to $+\infty$, how well elements $\alpha$ of $\mathcal{O}_K$ which are not in the image of the Minkowski embedding $j : K \hookrightarrow K_{\mathbb{R}} = K\otimes_{\mathbb{Q}} \mathbb{R}$ can be approximated using elements $j(b/p)$ ($b,p\in \mathcal{O}_K$, $p\not= 0$) subject to the principal ideal $p\mathcal{O}_K$ being prime and having its (absolute) norm $\asymp N$. This may be addressed in future research. However, in this article we decided to confine ourselves to quadratic fields since the computations in this case can be done relatively easily using a concrete description of Hecke characters. Here we first deal with the easier case of imaginary quadratic fields and then extend our methods to real quadratic fields. Extra difficulties when dealing with real quadratic fields come from the fact that for an integral ideal $\mathfrak{m}$, the subgroup of $(\mathcal{O}_K/\mathfrak{m})^{\times}$ consisting of cosets $u+\mathfrak{m}$ with $u$ unit in $\mathcal{O}_K$ can become very large (in fact, it can be whole of $(\mathcal{O}_K/\mathfrak{m})^{\times}$). For this reason, there are generally not enough Hecke characters modulo $\mathfrak{m}$ of trivial infinity type to detect residue classes, and therefore one needs to use Hecke characters of non-trivial infinity type as well. This is fundamentally different in the case of imaginary quadratic number fields, where the unit group is finite. \\ \\
{\bf Acknowledgements.} The authors are grateful to the referee for valuable comments. They would also like to thank the Ramakrishna Mission Vivekananda Educational and Research Insititute for an excellent work environment. The research of the second-named author was supported by a CSIR-NET Ph.D fellowship under file number 09/934(0012)/2019-EMR-l. The research of the third-named author was supported by a UGC NET grant under number NOV2017-424450. \\ \\
{\bf Competing interests statement:} There are no relevant financial or non-financial competing interests to report. 

\section{Hecke characters}
We will use Hecke characters to detect prime elements in segments of residue classes. These characters as well as their associated $L$-functions were introduced by Hecke (\cite{Hecke1918} and \cite{Hecke1920}) to obtain more refined information on the distribution of prime ideals in number fields than this is possible by just using the Dedekind zeta function. In a sense, Hecke characters generalize Dirichlet characters, which serve as a tool to detect residue classes in $\mathbb{Z}$. In addition to information about residue classes, Hecke characters also encode information about regions. To give an example, they can be used to count Gaussian primes in segments of sectors. In this paper, however, we will focus on prime elements in residue classes rather than regions. Hecke characters are defined on the group of non-zero fractional ideals of a number field $K$. This is in contrast to Dirichlet characters, which are defined on the elements of $\mathcal{O}_K$, the ring of integers of $K$. It is required to turn from elements of $\mathcal{O}_K$ to ideals in order to define associated $L$-functions in a meaningful way.
Below we give a fairly explicit description of Hecke characters, based on  material in \cite{Hecke1918}, \cite{Hecke1920}, \cite[section 3.3]{Miya} and \cite{mult}. 

\subsection{Notations}
\begin{itemize}
\item Let $x,y\in K$ and $\mathfrak{f}$ be an ideal in $\mathcal{O}_K$. Then $x$ and $y$ are said to be multiplicatively congruent modulo $\mathfrak{f}$, denoted as $x\equiv^{\ast} y \bmod{\mathfrak{f}}$, if the $\mathfrak{p}$-adic valuation of $(x-y)$ at any prime ideal $\mathfrak{p}$ dividing $\mathfrak{f}$ satisfies $v_{\mathfrak{p}}((x-y))\ge v_{\mathfrak{p}}(\mathfrak{f})$.  
\item If $\mathfrak{a}$ and $\mathfrak{b}$ are ideals in $\mathcal{O}_K$, then $\mathfrak{c}=$gcd$(\mathfrak{a},\mathfrak{b})$ is the unique integral ideal dividing both $\mathfrak{a}$ and $\mathfrak{b}$ such that if $\mathfrak{d}$ divides $\mathfrak{a}$ and $\mathfrak{b}$, then $\mathfrak{d}$ divides $\mathfrak{c}$. 
\item If $\mathcal{O}_K$ is a unique factorization domain and hence a gcd domain, then we write gcd$(a,b)\approx c$ if $c\in \mathcal{O}_K$ is a greatest common divisor of $a,b\in \mathcal{O}_K$. We note that if $K$ has class number one, then $\mathcal{O}_K$ is a gcd domain, and gcd$(a,b)\approx c$ is equivalent to gcd$((a),(b))=(c)$. 
\item If $\mathfrak{a}$ is a fractional and $\mathfrak{b}$ an integral ideal in $K$, then we say that $\mathfrak{a}$ is coprime to $\mathfrak{b}$ if $v_{\mathfrak{p}}(\mathfrak{a})=0$ for all prime ideals $\mathfrak{p}$ dividing $\mathfrak{b}$. 
\item By $\mathbb{P}$, we denote the set of non-zero prime ideals in $\mathcal{O}_K$. 
\end{itemize}

\subsection{Hecke characters on principal fractional ideals}
Hecke characters are certain characters of the multiplicative group of non-zero fractional ideals of a number field $K$ which are coprime to an integral ideal $\mathfrak{f}$, the modulus of the Hecke character. On the principal fractional ideals, they take the form
\begin{equation} \label{Heckechar}
\chi_{\text{Hecke}}((\alpha))=\chi(\alpha)\chi_\infty(\alpha) \, \text{ for all } \alpha\in K^{\times}, 
\end{equation}
with $\chi(\alpha)$ and $\chi_{\infty}(\alpha)$ having the properties described below.

We assume that $\chi$ is the multiplicative extension of a Dirichlet character $\tilde{\chi} \bmod \mathfrak{f}$ to $K$, where $\mathfrak{f} \subseteq \mathcal{O}_K$ is an integral ideal. (Here we mean by ''Dirichlet character" that $\tilde{\chi}$ is the pull-back of a group character for $(\mathcal{O}_K/\mathfrak{f})^{\ast}$ to 
$\mathcal{O}_K$.) To be precise, for $\alpha\in K$, this extension is defined as 
$$
\chi(\alpha):=\tilde{\chi}(a) \quad \mbox{if } a\in \mathcal{O}_K \mbox{ and } \alpha \equiv^{\ast} a \bmod{\mathfrak{f}}.   
$$
We note that $\chi$ is well-defined on $K$ in this way and refer to $\chi$ also as a Dirichlet character modulo $\mathfrak{f}$. It is not multiplicative on $K^{\times}$ but on the multiplicative subgroup 
$$
K(\mathfrak{f})=\left\{\alpha \in K^{\times} : v_{\mathfrak{p}}(\alpha)=0 \mbox{ for all prime ideals } \mathfrak{p} \mbox{ dividing } \mathfrak{f}\right\}.
$$ 

Let $\sigma_1,\dots, \sigma_{r_1}$ be the real and $\sigma_{r_1+1},\overline{\sigma_{r_1+1}},\dots,\sigma_{r_1+r_2},\overline{\sigma_{r_1+r_2}}$ be the complex embeddings of $K$, where $r_1+2r_2=n=[K:\mathbb{Q}]$. Denote the unit circle in the complex plane as $\mathbb{T}$. 
The infinity part $\chi_\infty$ on the right-hand side of \eqref{Heckechar} is supposed to have the property that there is a continuous character 
$$
\tilde{\chi}_{\infty} : (\mathbb{R}^{\times})^{r_1}\times (\mathbb{C}^{\times})^{r_2}\rightarrow \mathbb{T}
$$
such that 
$$
\chi_\infty(\alpha)=\tilde{\chi}_{\infty}(\sigma_1(\alpha),...,\sigma_{r_1+r_2}(\alpha)) \quad \mbox{for all } \alpha\in K(\mathfrak{f}).
$$
Explicitly, the infinity part looks as follows. (For a reference, see \cite[section 3.3]{Miya}.) For $\alpha\in K^\times$ let $\alpha_j=\sigma_j(\alpha)$. Then 
\begin{equation} \label{ChiInfty}
\chi_\infty(\alpha)= \displaystyle \prod_{j=1}^{r_1+r_2} \left((\alpha_j/|\alpha_j|)^{u_j}|\alpha_j|^{iv_j}\right),
\end{equation}
where $u_j, v_j$ are real numbers which satisfy the following conditions
\begin{enumerate}
\item $u_j\in \begin{cases}
\{0,1\} & \text{ for } j=1, ..., r_1, \\
\mathbb{Z} & \text{ for } j=r_1+1, ..., r_1+r_2,
\end{cases}$
\item $\sum\limits_{j=1}^{r_1+r_2} v_j=0$.
\end{enumerate}
The second condition above is not needed if we only demand $\chi_{\infty}$ to come from a continuous character $\tilde{\chi}_{\infty}$ but can be justified as follows. If 
$$
\frac{1}{r_1+r_2}\sum\limits_{j=1}^{r_1+r_2} v_j=\mu\not=0,
$$ 
then set $\tilde{v}_j:=v_j-\mu$. It follows that 
$$
\sum\limits_{j=1}^{r_1+r_2} \tilde{v}_j=0
$$ 
and 
$$
\prod\limits_{j=1}^{r_1+r_2}|\alpha_j|^{iv_j}=\mathcal{N}(\alpha)^{i\mu}\prod\limits_{j=1}^{r_1+r_2}|\alpha_j|^{i\tilde{v}_j}.
$$ 
Hence, condition (2) above can be forced at the expense of an additional factor of $\mathcal{N}(\alpha)^{i\mu}$, which amounts just to a shift of the corresponding $L$-function by $i\mu$ in vertical direction. 

If $\chi$ is given, then $\chi_\infty$ needs to be defined in such a way that $\chi(u)\chi_\infty(u)=1$ for all units $u\in\mathcal{O}_K^{\ast}$ so that the Hecke character $\chi_{\text{Hecke}}((\alpha))=\chi(\alpha)\chi_\infty(\alpha)$ is well-defined on the principal ideals $(\alpha)$. This restricts the choices of the exponents $u_j$ and $v_j$. In the next subsections, we work out these restrictions for real and imaginary quadratic field extensions, thus obtaining explicit expressions for all Hecke characters on the principal ideals in these cases. Throughout the sequel,
we say that a Hecke character $\chi_{\text{Hecke}}$ as given in \eqref{Heckechar} {\it belongs} to the Dirichlet character $\chi$.

\subsection{The case of imaginary quadratic extensions}
Let $K$ be an imaginary quadratic number field. In this case, \eqref{ChiInfty} takes the form
$$
\chi_{\infty}(\alpha)=\left(\frac{\alpha}{|\alpha|}\right)^u
$$
for a suitable $u\in \mathbb{Z}$. Let $\xi$ be a generator and $w$ be the order of the unit group $\mathcal{O}_K^{\ast}$. We have 
$$
w=\begin{cases} 6 & \mbox{ if } K=\mathbb{Q}(\sqrt{-3}),\\
4  & \mbox{ if } K=\mathbb{Q}(i), \\
2 & \mbox{ otherwise.}
\end{cases}
$$
It suffices to ensure that $\chi\chi_{\infty}(\xi)=1$. 

We note that $\chi(\xi)$ is an integral power of $\xi$ since $1=\chi(1)=\chi(\xi^w)=\chi(\xi)^w$ and $\xi$ is a primitive $w$-th root of unity. Suppose that 
$$
\chi(\xi)=\xi^{-k}.
$$
Then $\chi\chi_{\infty}(\xi)=1$ if and only if $u\equiv k \bmod{w}$, i.e. $\chi_{\infty}(\alpha)$ is of the form
$$
\chi_{\infty}(\alpha)=\left(\frac{\alpha}{|\alpha|}\right)^{wn+k} \quad \mbox{for some } n\in \mathbb{Z}.
$$

\subsection{The case of real quadratic extensions}
Now let $K$ be a real quadratic number field.  In this case, \eqref{ChiInfty} takes the form
\begin{equation}
\chi_\infty(\alpha)=\left|\frac{\sigma_1(\alpha)}{\sigma_2(\alpha)}\right|^{iv} \mbox{sgn}\left(\sigma_1(\alpha)\right)^{u_1} \mbox{sgn}\left(\sigma_2(\alpha)\right)^{u_2}
\end{equation}
for suitable real numbers $v$ and $u_1,u_2\in \{0,1\}$. The unit group $\mathcal{O}_K^{\ast}$ is generated by the fundamental unit $\epsilon>1$ and $-1$, and therefore it suffices to ensure that $\chi\chi_{\infty}(\epsilon)=1$ and $\chi\chi_{\infty}(-1)=1$.

Suppose that 
\begin{equation} \label{wdef}
\chi(\epsilon)\mbox{sgn}\left(\sigma_2(\epsilon)\right)^{u_2}= \exp(-2\pi i\gamma), \,\, \text{ i.e. } \,\, \gamma=\frac{l}{2}-\frac{\arg(\chi(\epsilon))}{2\pi},
\end{equation}
where 
$$
l:=\begin{cases} 0 & \mbox{ if } \mbox{sgn}\left(\sigma_2(\epsilon)\right)^{u_2}=1,\\
1 & \mbox{ if } \mbox{sgn}\left(\sigma_2(\epsilon)\right)^{u_2}=-1. \end{cases}
$$
Note that $\sigma_1(\epsilon)=\epsilon>0$ and hence 
$\mbox{sgn}\left(\sigma_1(\epsilon)\right)^{u_1}=1$.
To ensure that $\chi\chi_{\infty}(\epsilon)=1$, it therefore suffices to demand that 
$$\mbox{sgn}(\sigma_2(\epsilon))^{-u_2}\chi_\infty(\epsilon)=\left|\frac{\sigma_1(\epsilon)}{\sigma_2(\epsilon)}\right|^{iv}= \exp(2\pi i\gamma),
$$
i.e.,
\begin{equation*}
\epsilon^{2iv}=\exp(2iv\log \epsilon)=\exp(2\pi i\gamma). 
\end{equation*}
This is equivalent to  
\begin{equation*}
v= \frac{\pi(n+\gamma)}{\log \epsilon} \text{ for some } n\in \mathbb{Z}.
\end{equation*}
Moreover, if $\chi(-1)=1$, then we take $u_1=u_2$, and if $\chi(-1)=-1$, then we take $u_1\neq u_2$ to ensure that $\chi\chi_{\infty}(-1)=1$. So, altogether, the infinity part is of the form
\begin{equation} \label{infinitypart}
\chi_\infty(\alpha)=\left|\frac{\sigma_1(\alpha)}{\sigma_2(\alpha)}\right|^{i\pi(n+\gamma)/\log \epsilon}\cdot\mbox{sgn}(\sigma_1(\alpha))^{u_1}\cdot\mbox{sgn}(\sigma_2(\alpha))^{u_2}
\end{equation}
with $u_1,u_2\in\{0,1\}$ satisfying the above relations.

\subsection{Extension to general fractional ideals} Let $\mathfrak{f}$ be an integral ideal in $\mathcal{O}_K$. Let $I$ be the group of all non-zero fractional ideals, $P$ be the subgroup of non-zero principal fractional ideals, $I(\mathfrak{f})$ be the group of all non-zero fractional ideals coprime to $\mathfrak{f}$ and $P(\mathfrak{f})$ be the subgroup of non-zero principal fractional ideals coprime to $\mathfrak{f}$. It is known that
$$
\sharp\left(I(\mathfrak{f})/P(\mathfrak{f})\right)=\sharp\left(I/P\right)=h_K,
$$
the class number of $K$. In general, a Hecke character modulo $\mathfrak{f}$ is a function defined on $I$ which equals zero on all ideals in $I\setminus I(\mathfrak{f})$, whose restriction to $I(\mathfrak{f})$ is a character of $I(\mathfrak{f})$ and whose restriction to $P(\mathfrak{f})$ is a character as described in the previous subsections. Now we describe how a given Hecke character $\chi_{\text{Hecke}}$ on $P(\mathfrak{f})$ can be extended to Hecke characters on $I(\mathfrak{f})$. 

Since $I(\mathfrak{f})/P(\mathfrak{f})$ is an abelian group, we have 
$$
I(\mathfrak{f})/P(\mathfrak{f}) \cong \mathbb{Z}_{m_1} \oplus \cdots \oplus \mathbb{Z}_{m_k}
$$    
for suitable $m_1,...,m_k\in \mathbb{N}$. Let $i$ be the corresponding isomorphism. Then there are ideals $\mathfrak{a}_1,...,\mathfrak{a}_k$ in $I(\mathfrak{f})$ such that every element in $I(\mathfrak{f})/P(\mathfrak{f})$ can be uniquely written in the form $\mathfrak{a}_1^{j_1}\cdots \mathfrak{a}_k^{j_k}P(\mathfrak{f})$ with $0\le j_l\le m_l-1$ for $l=1,...,k$, where 
$$
i(\mathfrak{a}_lP(\mathfrak{f}))=(0,...,0,1,0,...,0)
$$  
with $1$ in $l$-th position. The set 
$$
\{\mathfrak{a}_1^{j_1}\cdots \mathfrak{a}_k^{j_k}: 0\le j_l\le m_l-1 \quad \mbox{for } l=1,...,k\}
$$
forms a system of coset representatives of $I(\mathfrak{f})/P(\mathfrak{f})$, where
$$
i(\mathfrak{a}_1^{j_1}\cdots \mathfrak{a}_k^{j_k}P(\mathfrak{f}))=(j_1,...,j_k).
$$
The ideals $\mathfrak{a}_l^{m_l}$ for $l=1,...,k$ are principal. For $\chi_{\text{Hecke}}$, given on $P(\mathfrak{f})$, to become a Hecke character on $I(\mathfrak{f})$, we require that 
\begin{equation} \label{ml}
\chi_{\text{Hecke}}(\mathfrak{a}_l)=\chi_{\text{Hecke}}\left(\mathfrak{a}_l^{m_l}\right)^{1/m_l} \quad \mbox{for } l=1,...,k,
\end{equation}
where $\chi_{\text{Hecke}}\left(\mathfrak{a}_l^{m_l}\right)^{1/m_l}$ denotes one of the $m_l$-th roots of 
$\chi_{\text{Hecke}}\left(\mathfrak{a}_l^{m_l}\right)$, which we keep fixed in the following. Now let $\psi$ be a character for the class group $I(\mathfrak{f})/P(\mathfrak{f})$. Write $\mathfrak{b}\in I(\mathfrak{f})$ uniquely in the form
$$
\mathfrak{b}=\mathfrak{a}_1^{j_1}\cdots \mathfrak{a}_k^{j_k}(\alpha)\quad \mbox{with } 0\le j_l\le m_l-1 \mbox{ for } l=1,...,k \mbox{ and } \alpha\in K. 
$$
Then 
$$
\chi_{\text{Hecke}}(\mathfrak{b})=\psi\left(\mathfrak{a}_1^{j_1}\cdots \mathfrak{a}_k^{j_k}P(\mathfrak{f})\right)\chi_{\text{Hecke}}\left(\mathfrak{a}_1^{m_1}\right)^{j_1/m_1}\cdots \chi_{\text{Hecke}}\left(\mathfrak{a}_k^{m_k}\right)^{j_k/m_k} \chi_{\text{Hecke}}((\alpha))
$$
defines a Hecke character on $I(\mathfrak{f})$. All Hecke characters on $I(\mathfrak{f})$ are in the above form. Thus, for every Hecke character $\chi_{\text{Hecke}}$ on $P(\mathfrak{f})$, there exist precisely $h_K$ extensions to $I(\mathfrak{f})$, one for each class group character $\psi$. Here we note that a change of the $m_l$-th root in \eqref{ml} amounts to a change of the class group character. We say that $\chi_{\text{Hecke}}$, defined on $I(\mathfrak{f})$, belongs to a Dirichlet character $\chi$ modulo $\mathfrak{f}$ if its restriction to $P(\mathfrak{f})$ does. We point out that class group characters themselves are Hecke characters modulo $(1)$. 

Among the Hecke characters modulo $\mathfrak{f}$, there is the principal character $\chi_{\text{Hecke},0}$ which satisfies 
$$
\chi_{\text{Hecke},0}(x)= \begin{cases} 1 & \mbox{ if } x\in I(\mathfrak{f}),\\ 0 & \mbox{ otherwise.} \end{cases}
$$ 
It belongs to the principal Dirichlet character $\chi_0$ modulo $\mathfrak{f}$. 

\subsection{Prime number theorem for Hecke characters}
In this subsection, we estimate character sums of the form 
$$
\sum\limits_{\substack{\mathfrak{p} \in \mathbb{P}\\ \mathcal{N}(\mathfrak{p})\le X}} \chi_{\text{Hecke}}(\mathfrak{p}),
$$
where $\mathbb{P}$ is the set of non-zero prime ideals in $\mathcal{O}_K$ and $\chi_{\text{Hecke}}$ is a Hecke character modulo $\mathfrak{f}$. This requires some preparations. Below we assume that $K$ is a general number field of degree $n$ and discriminant $d_K$ over $\mathbb{Q}$. We also assume that $X\ge 2$ throughout the following.

There exists a smallest (with respect to inclusion) ideal $\mathfrak{f}'$ dividing $\mathfrak{f}$ (or, equivalently, $\mathfrak{f}\subseteq \mathfrak{f}'$), such that $\chi_{\text{Hecke}}$, restricted to $I(\mathfrak{f})$, equals the restriction of a suitable Hecke character $\chi_{\text{Hecke}}'$ modulo $\mathfrak{f}'$ to $I(\mathfrak{f})$. In this case, $\mathfrak{f}'$ is called the conductor of $\chi_{\text{Hecke}}$, and we say that $\chi_{\text{Hecke}}'$ induces the character $\chi_{\text{Hecke}}$. If the conductor equals the modulus, i.e., $\mathfrak{f}'=\mathfrak{f}$, then $\chi_{\text{Hecke}}$ is referred to  as a primitive Hecke character. 

Clearly, in the above notation,
\begin{equation} \label{redprim}
\sum\limits_{\substack{\mathfrak{p} \in \mathbb{P}\\ \mathcal{N}(\mathfrak{p})\le X}} \chi_{\text{Hecke}}(\mathfrak{p})=\sum\limits_{\substack{\mathfrak{p} \in \mathbb{P}\\ \mathcal{N}(\mathfrak{p})\le X}} \chi_{\text{Hecke}}'(\mathfrak{p})+O\left(\omega(\mathfrak{f})\right),
\end{equation}
where $\omega(\mathfrak{f})$ is the number of prime ideal divisors of $\mathfrak{f}$. Since $\chi_{\text{Hecke}}'(\mathfrak{p})$ is primitive, it suffices to bound character sums over prime ideals for {\it primitive} Hecke characters.  If $\mathfrak{f}=(1)=\mathcal{O}_K$, then the relevant character sum turns into 
$$
\sum\limits_{\substack{\mathfrak{p} \in \mathbb{P}\\ \mathcal{N}(\mathfrak{p})\le X}} \chi_{\text{Hecke}}(\mathfrak{p})=\sum\limits_{\substack{\mathfrak{p} \in \mathbb{P}\\ \mathcal{N}(\mathfrak{p})\le X}} 1.
$$
In this case, the following asymptotic estimate can be obtained from a general version of the prime number theorem under GRH, \cite[Theorem 5.15]{IwKo}, via partial summation.

\begin{theorem}[Prime ideal theorem under GRH]\label{PITunderGRH} Under GRH for the Dedekind zeta function of $K$, we have
\begin{equation*}
\sum\limits_{\substack{\mathfrak{p} \in \mathbb{P}\\ \mathcal{N}(\mathfrak{p})\le X}} 1= \int\limits_2^X \frac{\mbox{\rm d}t}{\log t} + O\left(X^{1/2}\log X\right),
\end{equation*}
the implied $O$-constant depending only on $K$. 
\end{theorem}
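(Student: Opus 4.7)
The plan is to deduce this by the standard route from the explicit formula for $\zeta_K$ via partial summation, following the authors' own suggestion. First I would apply Theorem 5.15 of Iwaniec--Kowalski with the trivial Hecke character (i.e.\ to the Dedekind zeta function) to obtain, under GRH, the weighted prime-ideal estimate
$$\psi_K(X) := \sum_{\mathcal{N}(\mathfrak{a}) \le X} \Lambda_K(\mathfrak{a}) = X + O\left(X^{1/2}(\log X)^2\right),$$
where $\Lambda_K(\mathfrak{a}) = \log \mathcal{N}(\mathfrak{p})$ when $\mathfrak{a} = \mathfrak{p}^k$ for some $k \ge 1$ and vanishes otherwise. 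The implied constant is allowed to depend on $K$.

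Second, I would reduce to the unweighted prime-ideal sum. Setting $\theta_K(X) := \sum_{\mathcal{N}(\mathfrak{p}) \le X} \log \mathcal{N}(\mathfrak{p})$, the contribution of powers $k \ge 2$ in $\psi_K$ is
$$\sum_{k \ge 2}\,\sum_{\mathcal{N}(\mathfrak{p})^k \le X} \log \mathcal{N}(\mathfrak{p}) \ll X^{1/2}\log X,$$
since each rational prime $p \le X^{1/2}$ lies below at most $n=[K:\mathbb{Q}]$ prime ideals of $\mathcal{O}_K$. Hence $\theta_K(X) = X + O(X^{1/2}(\log X)^2)$. Abel summation then gives
$$\sum_{\substack{\mathfrak{p}\in\mathbb{P}\\ \mathcal{N}(\mathfrak{p}) \le X}} 1 \;=\; \frac{\theta_K(X)}{\log X} + \int_2^X \frac{\theta_K(t)}{t(\log t)^2}\,\mathrm{d}t,$$
and inserting the asymptotic for $\theta_K$ produces a main term matching $\int_2^X \mathrm{d}t/\log t$ (via the integration-by-parts identity $\int_2^X \mathrm{d}t/\log t = X/\log X + \int_2^X \mathrm{d}t/(\log t)^2 + O(1)$) and a total error of order
$$\frac{X^{1/2}(\log X)^2}{\log X} + \int_2^X t^{-1/2}\,\mathrm{d}t \;\ll\; X^{1/2}\log X.$$

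No single step is a genuine obstacle, as the machinery is entirely classical once the explicit formula for $\zeta_K$ under GRH is in hand. The one point requiring care is the book-keeping at the partial-summation step: the factor $1/\log X$ recovered from Abel summation must absorb exactly one of the two logarithmic factors in the error for $\psi_K$, so that the final estimate comes out as the sharp $O(X^{1/2}\log X)$ and not a weaker $O(X^{1/2}(\log X)^2)$.
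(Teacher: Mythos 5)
Your proposal is correct and follows essentially the same route as the paper, which simply cites \cite[Theorem 5.15]{IwKo} applied to the Dedekind zeta function followed by partial summation; your write-up just makes explicit the standard intermediate steps (removing prime powers to pass from $\psi_K$ to $\theta_K$, then Abel summation), and the bookkeeping correctly yields the stated $O\left(X^{1/2}\log X\right)$ error.
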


Below we assume that $\chi_{\text{Hecke}}$ is a primitive Hecke character modulo $\mathfrak{f}$, where $\mathfrak{f}\not=(1)$. The Hecke $L$-function associated to $\chi_{\text{Hecke}}$ is defined as 
$$
L(\chi_{\text{Hecke}},s)=\sum\limits_{(0)\not= \mathfrak{a}\subseteq \mathcal{O}_K} \chi_{\text{Hecke}}(\mathfrak{a})\mathcal{N}(\mathfrak{a})^{-s}= \prod\limits_{\mathfrak{p}\in \mathbb{P}} \left(1-\chi_{\text{Hecke}}(\mathfrak{p})\mathcal{N}(\mathfrak{p})^{-s}\right)^{-1} \quad \mbox{ if } \Re(s)>1,
$$
where the sum above is taken over all non-zero integral ideals $\mathfrak{a}$ in $\mathcal{O}_K$. This function extends analytically to the whole complex plane, and the extension, also denoted $L(\chi_{\text{Hecke}},s)$, satisfies a functional equation relating 
$L(\chi_{\text{Hecke}},s)$ to $L(\overline{\chi}_{\text{Hecke}},1-s)$. In fact, $L(\chi_{\text{Hecke}},s)$  is an $L$-function of degree $n$ and conductor $|d_K|\mathcal{N}(\mathfrak{f})$, where $d_K$ is the discriminant of $K$. For the details, see \cite[pages 60 and 129]{IwKo}. Again applying \cite[Theorem 5.15]{IwKo} together with partial summation, we have the following.

\begin{theorem}[Prime number theorem for Hecke characters]\label{PNTHecke}  Let $\chi_{\text{\rm Hecke}}$ be a primitive Hecke character modulo an ideal $\mathfrak{f}\not=(1)$ in $\mathcal{O}_K$. Assume that GRH holds for the Hecke $L$-function $L(\chi_{\text{\rm Hecke}},s)$. Then
$$
\sum\limits_{\substack{\mathfrak{p} \in \mathbb{P}\\ \mathcal{N}(\mathfrak{p})\le X}} \chi_{\text{\rm Hecke}}(\mathfrak{p}) \ll X^{1/2}\log\left(X\mathcal{N}(\mathfrak{f})\right),
$$
the implied constant only depending on $K$. 
\end{theorem}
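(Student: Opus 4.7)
The plan is to deduce the estimate from the general prime number theorem for $L$-functions under GRH, \cite[Theorem 5.15]{IwKo}, applied to $L(\chi_{\text{Hecke}},s)$, followed by standard partial summation. Introduce the ideal-theoretic von Mangoldt function $\Lambda_K$, supported on prime-power ideals with $\Lambda_K(\mathfrak{p}^k)=\log\mathcal{N}(\mathfrak{p})$, and set
$$
\psi(X,\chi_{\text{Hecke}}) := \sum_{\mathcal{N}(\mathfrak{a})\le X}\Lambda_K(\mathfrak{a})\chi_{\text{Hecke}}(\mathfrak{a}),\qquad \theta(X,\chi_{\text{Hecke}}) := \sum_{\mathcal{N}(\mathfrak{p})\le X}\chi_{\text{Hecke}}(\mathfrak{p})\log\mathcal{N}(\mathfrak{p}).
$$
Since $\chi_{\text{Hecke}}$ is primitive of modulus $\mathfrak{f}\ne(1)$, the Hecke $L$-function $L(\chi_{\text{Hecke}},s)$ is entire, of degree $n=[K:\mathbb{Q}]$ and analytic conductor $\asymp |d_K|\mathcal{N}(\mathfrak{f})$, as already recorded in the text via \cite[pp.~60 and 129]{IwKo}. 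Thus the axioms of \cite[Theorem 5.15]{IwKo} apply and, under GRH for $L(\chi_{\text{Hecke}},s)$, yield
$$
\psi(X,\chi_{\text{Hecke}}) \ll X^{1/2}\bigl(\log(X\mathcal{N}(\mathfrak{f}))\bigr)^{2},
$$
the implied constant depending only on $K$ (the $|d_K|$-factor being absorbed).

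Next, the difference $\psi(X,\chi_{\text{Hecke}})-\theta(X,\chi_{\text{Hecke}})$ comes from prime-power ideals $\mathfrak{p}^{k}$ with $k\ge 2$. Bounding $|\chi_{\text{Hecke}}(\mathfrak{p}^{k})|\le 1$ and noting that each rational prime has at most $n$ prime ideals of $\mathcal{O}_K$ lying above it, this contribution is $\ll X^{1/2}\log X$. Consequently, $\theta(X,\chi_{\text{Hecke}})$ inherits the bound $\ll X^{1/2}(\log(X\mathcal{N}(\mathfrak{f})))^{2}$.

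Finally, apply Abel summation with the weight $1/\log t$:
$$
\sum_{\mathcal{N}(\mathfrak{p})\le X}\chi_{\text{Hecke}}(\mathfrak{p}) = \frac{\theta(X,\chi_{\text{Hecke}})}{\log X}+\int_{2}^{X}\frac{\theta(t,\chi_{\text{Hecke}})}{t\log^{2}t}\,dt.
$$
For the boundary term, write $\log(X\mathcal{N}(\mathfrak{f}))^{2}/\log X = \log(X\mathcal{N}(\mathfrak{f}))\cdot(1+\log\mathcal{N}(\mathfrak{f})/\log X)$, and handle the two cases $\mathcal{N}(\mathfrak{f})\le X$ and $\mathcal{N}(\mathfrak{f})>X$ separately (the latter being trivial up to adjustment of constants). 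For the integral, split at $t=\mathcal{N}(\mathfrak{f})$ so that in the upper range $\log t\asymp\log(t\mathcal{N}(\mathfrak{f}))$ and in the lower range $\log(t\mathcal{N}(\mathfrak{f}))\asymp\log\mathcal{N}(\mathfrak{f})$; in both sub-ranges the integral of $t^{-1/2}$ contributes only $O(X^{1/2})$, so the net effect is to trade one factor of $\log$ for a factor of $O(1)$. Combining gives the claimed bound $\ll X^{1/2}\log(X\mathcal{N}(\mathfrak{f}))$. There is no genuine obstacle here: the entire argument is mechanical once the input from \cite[Theorem 5.15]{IwKo} is in place, and the only point worth checking is that $L(\chi_{\text{Hecke}},s)$ fits the Iwaniec--Kowalski framework of $L$-functions, which the paper has already flagged.
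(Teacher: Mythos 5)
Your route is exactly the paper's: the paper's entire proof is the one-line appeal to \cite[Theorem 5.15]{IwKo} for $L(\chi_{\text{Hecke}},s)$ followed by partial summation, and your write-up fills in precisely those details (removal of prime powers, Abel summation with weight $1/\log t$). One step, however, is not right as stated: after weakening the input to $\psi(X,\chi_{\text{Hecke}})\ll X^{1/2}\bigl(\log(X\mathcal{N}(\mathfrak{f}))\bigr)^{2}$, your boundary term is $X^{1/2}\bigl(\log(X\mathcal{N}(\mathfrak{f}))\bigr)^{2}/\log X$, and you dismiss the case $\mathcal{N}(\mathfrak{f})>X$ as ``trivial up to adjustment of constants''. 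It is not: the trivial bound for the sum over prime ideals is $\gg X/\log X$, which exceeds $X^{1/2}\log(X\mathcal{N}(\mathfrak{f}))$ throughout the range, say, $(\log X)^{2}\ll\log\mathcal{N}(\mathfrak{f})\ll X^{1/2}$, and in that same range your boundary term (and your lower-range integral estimate) overshoots the target by a factor of roughly $\log\mathcal{N}(\mathfrak{f})/\log X$. The repair is immediate and removes the case analysis altogether: keep the error term of \cite[Theorem 5.15]{IwKo} in its product form $X^{1/2}(\log X)\log\bigl(X^{n}\mathcal{N}(\mathfrak{f})|d_K|\bigr)\ll_K X^{1/2}(\log X)\log(X\mathcal{N}(\mathfrak{f}))$ rather than squaring the logarithm; then the boundary term is already $\ll X^{1/2}\log(X\mathcal{N}(\mathfrak{f}))$, and the integral satisfies
\begin{equation*}
\int_{2}^{X}\frac{\theta(t,\chi_{\text{Hecke}})}{t\log^{2}t}\,\mathrm{d}t
\ll\int_{2}^{X}t^{-1/2}\,\frac{\log(t\mathcal{N}(\mathfrak{f}))}{\log t}\,\mathrm{d}t
\ll\Bigl(1+\frac{\log\mathcal{N}(\mathfrak{f})}{\log 2}\Bigr)X^{1/2}
\ll X^{1/2}\log\bigl(X\mathcal{N}(\mathfrak{f})\bigr),
\end{equation*}
uniformly in $\mathfrak{f}$. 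With that adjustment your argument is complete and coincides with the paper's intended proof.
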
    

Now, for any Hecke character modulo $\mathfrak{f}$ (not necessarily primitive), we infer the following from \eqref{redprim}, Theorems \ref{PITunderGRH} and \ref{PNTHecke} and the well-known bound $\omega(\mathfrak{f})\ll \log \mathcal{N}(\mathfrak{f})$. 
 
\begin{corollary}\label{GRHcharsum} Let $\chi_{\text{\rm Hecke}}$ be a Hecke character modulo an ideal $\mathfrak{f}$ in $\mathcal{O}_K$ which is induced by a primitive Hecke character $\chi_{\text{\rm Hecke}}'$. Assume that GRH holds for the Hecke $L$-function $L(\chi_{\text{\rm Hecke}}',s)$. Then
\begin{equation*}
    \sum\limits_{\substack{\mathfrak{p} \in \mathbb{P}\\ \mathcal{N}(\mathfrak{p})\le X}} \chi_{\rm Hecke}(\mathfrak{p})=\begin{cases}
        \int\limits_{2}^{X}\frac{\mbox{\tiny \rm d}t}{\log t} +O\left(X^{1/2}\log\left(X\mathcal{N}(\mathfrak{f})\right)\right) & \mbox{\rm if }  \chi_{\rm Hecke} \ \mbox{\rm is principal,}\\
        O\left(X^{1/2}\log\left(X\mathcal{N}(\mathfrak{f})\right)\right) & \mbox{\rm otherwise.}
    \end{cases}
\end{equation*}
\end{corollary}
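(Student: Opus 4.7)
The plan is to prove the corollary by straightforwardly combining equation \eqref{redprim}, Theorem \ref{PITunderGRH}, Theorem \ref{PNTHecke}, and the standard bound $\omega(\mathfrak{f}) \ll \log \mathcal{N}(\mathfrak{f})$. I would open the proof by invoking \eqref{redprim} to reduce the sum over $\chi_{\text{Hecke}}$ to the sum over the primitive character $\chi_{\text{Hecke}}'$ that induces it, picking up an error of size $O(\omega(\mathfrak{f})) = O(\log \mathcal{N}(\mathfrak{f}))$. Since $\log \mathcal{N}(\mathfrak{f}) \le X^{1/2}\log(X\mathcal{N}(\mathfrak{f}))$ for $X \geq 2$, this error is absorbed into the stated bound in every case, so it suffices to handle the sum for $\chi_{\text{Hecke}}'$.

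Next I would split into two cases according to whether $\chi_{\text{Hecke}}$ is principal. If $\chi_{\text{Hecke}}$ is principal, then its inducing primitive character is the trivial Hecke character modulo $(1)$, i.e.\ the constant function $1$ on non-zero integral ideals. Hence
\[
\sum_{\substack{\mathfrak{p}\in\mathbb{P}\\ \mathcal{N}(\mathfrak{p})\le X}} \chi_{\text{Hecke}}'(\mathfrak{p}) \;=\; \sum_{\substack{\mathfrak{p}\in\mathbb{P}\\ \mathcal{N}(\mathfrak{p})\le X}} 1,
\]
and Theorem \ref{PITunderGRH} (applicable under the assumed GRH for the Dedekind zeta function, which is the $L$-function of the trivial Hecke character) supplies the main term $\int_2^X \mathrm{d}t/\log t$ plus error $O(X^{1/2}\log X)$. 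Combining with the reduction error gives the first branch of the corollary.

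If $\chi_{\text{Hecke}}$ is not principal, then $\chi_{\text{Hecke}}'$ is a primitive non-trivial Hecke character modulo some conductor $\mathfrak{f}'$ dividing $\mathfrak{f}$. In this case Theorem \ref{PNTHecke}, together with the assumed GRH for $L(\chi_{\text{Hecke}}',s)$, yields
\[
\sum_{\substack{\mathfrak{p}\in\mathbb{P}\\ \mathcal{N}(\mathfrak{p})\le X}} \chi_{\text{Hecke}}'(\mathfrak{p}) \;\ll\; X^{1/2}\log\bigl(X\mathcal{N}(\mathfrak{f}')\bigr) \;\le\; X^{1/2}\log\bigl(X\mathcal{N}(\mathfrak{f})\bigr),
\]
since $\mathcal{N}(\mathfrak{f}') \le \mathcal{N}(\mathfrak{f})$. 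Combined with the reduction error, this is exactly the second branch of the corollary.

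There is no real obstacle here; the only mild subtlety is checking that the conductor $\mathfrak{f}'$ of a non-principal $\chi_{\text{Hecke}}$ indeed supports application of Theorem \ref{PNTHecke} (i.e.\ $\mathfrak{f}' \neq (1)$ unless a non-trivial Hecke character with trivial finite conductor is treated; in the latter situation the same $L$-function bound in \cite[Theorem 5.15]{IwKo} applies verbatim), and that the reduction error $O(\log \mathcal{N}(\mathfrak{f}))$ is uniformly dominated by $X^{1/2}\log(X\mathcal{N}(\mathfrak{f}))$ for $X \geq 2$. Both are routine.
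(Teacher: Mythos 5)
Your proposal is correct and follows essentially the same route as the paper, which derives the corollary in one step from \eqref{redprim}, Theorems \ref{PITunderGRH} and \ref{PNTHecke}, and the bound $\omega(\mathfrak{f})\ll\log\mathcal{N}(\mathfrak{f})$. Your explicit remark about non-principal characters whose conductor is $(1)$ (where \cite[Theorem 5.15]{IwKo} still applies directly) addresses a point the paper's one-line proof leaves implicit, but it does not change the argument.
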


\subsection{Landau's bound for sums of Hecke characters}
We also need a bound for sums of the form
$$
\sum\limits_{\substack{\mathfrak{a} \ \text{integral ideal}\\ \mathcal{N}(\mathfrak{a})\le X}} \chi_{\text{Hecke}}(\mathfrak{a}),
$$
generalizing the Polya-Vinogradov bound for Dirichlet characters. Landau proved the following result for general number fields $K$ of degree $n$ over $\mathbb{Q}$. 

\begin{theorem}[Landau] \label{Landau} Let $\chi_{\text{\rm Hecke}}$ be a Hecke character modulo an ideal $\mathfrak{f}$ in $\mathcal{O}_K$. Then 
$$
\sum\limits_{\substack{\mathfrak{a} \ \text{\rm integral ideal}\\ \mathcal{N}(\mathfrak{a})\le X}} \chi_{\rm Hecke}(\mathfrak{a}) = \begin{cases} A_KX+ O\left(\mathcal{N}(\mathfrak{f})^{1/(n+1)} \left(\log^n 2\mathcal{N}(\mathfrak{f})\right) X^{(n-1)/(n+1)}\right) & \mbox{\rm if } \chi_{\rm Hecke} \ \mbox{\rm is principal,}\\
O\left(\mathcal{N}(\mathfrak{f})^{1/(n+1)} \left(\log^n 2\mathcal{N}(\mathfrak{f})\right) X^{(n-1)/(n+1)}\right) & \mbox{\rm otherwise,}\end{cases}
$$
where $A_K$ is a positive constant depending only on the field $K$. 
\end{theorem}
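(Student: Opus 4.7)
The approach is to reduce the ideal sum to a character-weighted lattice point count in the Minkowski embedding and then apply a Landau-type smoothing argument for the resulting lattice problem. First I would decompose by ideal class: fix integral representatives $\mathfrak{b}_1,\dots,\mathfrak{b}_{h_K}$ for the inverse ideal classes, chosen coprime to $\mathfrak{f}$. For each class $C_i$, the integral ideals $\mathfrak{a}\in C_i$ are in bijection (via $\alpha\mapsto \mathfrak{b}_i^{-1}(\alpha)$) with $\mathcal{O}_K^{\ast}$-orbits of nonzero $\alpha\in\mathfrak{b}_i$, and $\mathcal{N}(\mathfrak{a})=\mathcal{N}(\alpha)/\mathcal{N}(\mathfrak{b}_i)$. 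When $\mathfrak{a}$ is coprime to $\mathfrak{f}$ the Hecke character factors as $\chi_{\text{Hecke}}(\mathfrak{a})=\chi_{\text{Hecke}}(\mathfrak{b}_i)^{-1}\chi(\alpha)\chi_\infty(\alpha)$. Splitting $\alpha$ further by residue class $a\pmod{\mathfrak{b}_i\mathfrak{f}}$ freezes $\chi(\alpha)=\tilde{\chi}(a)$ and reduces the problem to counting, with weight $\chi_\infty(\alpha)$, elements of a translate $a+\mathfrak{b}_i\mathfrak{f}$ lying in the region $\{x\in K_{\mathbb{R}}:|\mathcal{N}(x)|\le X\mathcal{N}(\mathfrak{b}_i)\}$, modulo the unit action.

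Next, using the logarithmic embedding from Dirichlet's unit theorem, I would choose a bounded fundamental domain $\mathcal{F}\subset K_{\mathbb{R}}^{\times}$ for $\mathcal{O}_K^{\ast}$. On $\mathcal{F}$ the weight $\chi_\infty$ is a smooth function with controlled derivatives, and the count becomes the number of lattice points of $a+\mathfrak{b}_i\mathfrak{f}$ (a lattice of covolume $\asymp \sqrt{|d_K|}\,\mathcal{N}(\mathfrak{b}_i)\mathcal{N}(\mathfrak{f})$) in a bounded region of volume $\asymp X$, weighted by the smooth oscillatory function $\chi_\infty$. Landau's lattice point argument then applies: smooth the characteristic function of the region by convolving with a bump of radius $\delta$, estimate the resulting smoothed sum by Poisson summation against the Fourier decay of the bump, and bound the smoothing loss by $\delta$ times the $(n-1)$-dimensional boundary measure. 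Optimising $\delta$ against the covolume yields an error of order $(\mathcal{N}(\mathfrak{f})\mathcal{N}(\mathfrak{b}_i))^{1/(n+1)}X^{(n-1)/(n+1)}$; the summation over residue classes in $(\mathcal{O}_K/\mathfrak{f})^{\ast}$ and over the $r_1+r_2-1$ coordinate directions of $\mathcal{F}$ produces the $\log^n(2\mathcal{N}(\mathfrak{f}))$ factor.

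In the principal case the main term is the volume of the region divided by the lattice covolume, summed over residue classes in $(\mathcal{O}_K/\mathfrak{f})^{\ast}$ and ideal classes, yielding $A_KX$ with $A_K$ proportional to the residue of the Dedekind zeta function at $s=1$. In the non-principal case, orthogonality of the Dirichlet character across residue classes, together with cancellation of $\chi_\infty$ across $\mathcal{F}$ arising from the non-triviality of the exponents $(u_j,v_j)$, annihilates the main term and only the error survives. I expect the main obstacle to be controlling the oscillatory contribution of $\chi_\infty$ uniformly over $\mathcal{F}$, since this is what distinguishes the sharp factor $\mathcal{N}(\mathfrak{f})^{1/(n+1)}$ from the weaker $\mathcal{N}(\mathfrak{f})^{1/n}$ coming from a naive boundary bound: it requires integration-by-parts estimates exploiting the explicit formulae for $\chi_\infty$ from the previous subsections, together with careful bookkeeping of the diameter of $\mathcal{F}$ relative to $\epsilon^{\pm 1}$ in the real quadratic setting.
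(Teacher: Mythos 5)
The paper does not prove this statement at all --- it is quoted directly from Landau's 1918 paper \cite{Lan} --- so the only question is whether your sketch would actually deliver the stated bound, and as written it would not. The crux is the conductor aspect: the exponent $\mathcal{N}(\mathfrak{f})^{1/(n+1)}$ is a square-root-type saving in the conductor (for $n=1$ the statement \emph{is} P\'olya--Vinogradov, $O(q^{1/2}\log q)$ uniformly in $X$), and such a saving cannot come from smoothing the norm body, counting lattice points in each translate $a+\mathfrak{b}_i\mathfrak{f}$, and then summing over the residue classes $a$. Your accounting asserts that the summation over $(\mathcal{O}_K/\mathfrak{f})^{\ast}$ ``produces the $\log^n(2\mathcal{N}(\mathfrak{f}))$ factor,'' but there are $\varphi(\mathfrak{f})\asymp\mathcal{N}(\mathfrak{f})^{1-o(1)}$ classes, and summing per-class lattice-point errors with absolute values costs a factor of order $\mathcal{N}(\mathfrak{f})$, not a logarithm; in the non-principal case the orthogonality you invoke kills only the main terms, leaving exactly this sum of error terms, in which you need genuine cancellation. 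That cancellation is the missing ingredient: one must expand the finite part of $\chi_{\rm Hecke}$ in additive characters of $\mathcal{O}_K/\mathfrak{f}$ (or, equivalently, use Hecke's theta transformation/functional equation, which is how Landau argues), so that Gauss sums of modulus $\mathcal{N}(\mathfrak{f})^{1/2}$ appear on the dual side of Poisson summation; the $\log^n$ then arises from the harmonic sum over dual frequencies, not from the residue classes.

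A secondary gap of the same nature concerns the $X$-aspect. Even for the principal character with $\mathfrak{f}=(1)$, the error $X^{(n-1)/(n+1)}$ is Landau's classical improvement over the trivial boundary estimate $X^{(n-1)/n}$, and ``convolve with a bump of radius $\delta$, apply Poisson, optimise $\delta$ against the covolume'' does not produce it by itself: you must exploit the oscillation/curvature of the boundary of the norm body $\{|N(x)|\le X\}$ truncated by the unit fundamental domain (stationary phase on the dual side), or argue through the functional equation of $\zeta_K$ respectively of $L(\chi_{\rm Hecke},s)$. Your final paragraph correctly identifies controlling $\chi_\infty$ on $\mathcal{F}$ as delicate, but the real obstruction is that without the Gauss-sum/functional-equation input the method caps out at roughly $\mathcal{N}(\mathfrak{f})\,X^{(n-1)/n}$-type errors. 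If you want a self-contained proof along your lines, the repair is to run Poisson summation on the full lattice $\mathfrak{b}_i$ with the weight $\tilde{\chi}(\alpha)\chi_\infty(\alpha)$ attached (rather than class by class), extract the Gauss sum, and only then balance the smoothing parameter; otherwise simply cite \cite{Lan}, as the paper does.
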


\begin{proof} This is proved in \cite{Lan}.
\end{proof}

In the case of class group characters, we have $\mathfrak{f}=(1)$ and therefore deduce the following from Theorem \ref{Landau} above. 

\begin{corollary} \label{classgroupchar} Let $\psi$ be a character for the class group of $K$. Then 
$$
\sum\limits_{\substack{\mathfrak{a} \ \text{\rm integral ideal}\\ \mathcal{N}(\mathfrak{a})\le X}} \psi(\mathfrak{a}) = \begin{cases} A_KX+ O\left(X^{(n-1)/(n+1)}\right) & \mbox{\rm if } \psi \ \mbox{\rm is principal,}\\
O\left(X^{(n-1)/(n+1)}\right) & \mbox{\rm otherwise,}\end{cases}
$$
where $A_K$ is a positive constant depending only on the field $K$. 
\end{corollary}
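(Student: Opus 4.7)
The plan is to deduce this directly from Theorem \ref{Landau} by specializing to the trivial modulus $\mathfrak{f}=(1)=\mathcal{O}_K$. The key observation, already flagged in the preceding text, is that every character $\psi$ of the class group $I/P$ can be viewed as a Hecke character modulo $(1)$: on the fractional ideals coprime to $(1)$ (which is all of $I$), $\psi$ factors through $I/P$ and is multiplicative, while the infinity part and Dirichlet part are trivial because $\mathfrak{f}=(1)$ leaves no congruence restriction. Thus $\psi$ falls squarely within the hypothesis of Theorem \ref{Landau}.

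With this identification in hand, I would simply substitute $\mathfrak{f}=(1)$ into the bound provided by Theorem \ref{Landau}. Since $\mathcal{N}((1))=1$, the factor $\mathcal{N}(\mathfrak{f})^{1/(n+1)}$ collapses to $1$, and $\log^n 2\mathcal{N}(\mathfrak{f}) = \log^n 2$ is an absolute constant depending only on $n$ (hence on $K$), so it is absorbed into the $O$-constant. This yields the error term $O(X^{(n-1)/(n+1)})$ in both cases of the corollary. The main term $A_K X$ appears exactly as in Theorem \ref{Landau} when $\psi$ is principal, with the same constant $A_K$ (which, up to the convention used, is essentially the residue of the Dedekind zeta function of $K$ at $s=1$).

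There is essentially no obstacle here: the entire content is bookkeeping, namely checking that the conditions of Theorem \ref{Landau} are met by class group characters and then simplifying the quantitative bound using $\mathcal{N}(\mathfrak{f})=1$. The only minor point worth verifying is that the principal Hecke character modulo $(1)$ in the sense of Theorem \ref{Landau} coincides with the principal class group character, which is immediate because in both cases the function is the constant $1$ on all non-zero integral ideals.
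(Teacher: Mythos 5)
Your proposal is correct and matches the paper's own argument: Corollary \ref{classgroupchar} is obtained by viewing a class group character as a Hecke character modulo $(1)$ (as noted earlier in the paper) and specializing Theorem \ref{Landau} with $\mathcal{N}(\mathfrak{f})=1$, so the error factor collapses to $O\left(X^{(n-1)/(n+1)}\right)$. Nothing further is needed.
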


\section{Proof of Theorem \ref{firstmainresult}}
In this section, we shall assume that $K$ is an imaginary quadratic field and prove Theorem \ref{firstmainresult}, our first main result. We handle the easier case when the class number equals 1 first. Afterwards, we extend our proof to the general case of an arbitrary class number. We shall use the following notations. 

\subsection{Notations}
Throughout this section, let $K= \mathbb{Q}(\sqrt{-d})$ be an imaginary quadratic field, where $d$ is a positive square-free integer. 
\begin{itemize}
\item Recall that $\mathcal{O}_K=\mathbb{Z}[\eta]$, where 
$$
\eta:=\begin{cases}
\sqrt{-d} & \mbox{if} -d\equiv 2,3 \bmod{4},\\ \frac{1+\sqrt{-d}}{2} & \mbox{if} -d\equiv 1 \bmod 4.
\end{cases}
$$
Thus, $\{1,\eta\}$ forms a $\mathbb{Z}$-basis of $\mathcal{O}_K$.
In particular, $\{1,\eta\}$ works as an $\mathbb{R}$-basis of $\mathbb{C}$. 
\item For any $x\in \mathbb{R}$, we define 
$$
||x||:=\mbox{min}\left\{ |x-a| : a\in \mathbb{Z} \right\},
$$ 
the distance of $x$ to the nearest integer. This function gives rise to a metric on $\mathbb{R}/\mathbb{Z}$ since it satisfies the triangle inequality
$$
||x+y||\le ||x||+||y|| \quad \mbox{for all } x,y\in \mathbb{R}.
$$
\item By $|.|$, we denote the Euclidean norm on $\mathbb{C}$. We note that
$\mathcal{N}\left(m\right)=|m|^2$ if $m\in \mathcal{O}_K$.
\item For any $x\in \mathbb{C}$, we define  
\begin{equation*}
||x||_K:=\mbox{min}\left\{ |x-a| : a\in \mathcal{O}_K\right\},
\end{equation*}
the distance of $x$ to the nearest algebraic integer in $\mathcal{O}_K$. This function gives rise to a metric on $\mathbb{C}/\mathcal{O}_K$ since it satisfies the triangle inequality
$$
||x+y||_K\le ||x||_K+||y||_K
$$ 
for all $x,y\in \mathbb{C}$.
\item We define the Euler totient function on non-zero ideals $\mathfrak{a}$ in $\mathcal{O}_K$ as 
$$
\varphi\left(\mathfrak{a}\right):=\sharp\left(\mathcal{O}_K/\mathfrak{a}\right)^{\ast}
$$
and set
$$
\varphi\left(q\right):=\varphi((q))
$$
if $q\in \mathcal{O}_K$. These notations will be kept in the case of real quadratic fields $K$ in section \ref{realquadfields}.
\item  We define the M\"obius function on the non-zero ideals $\mathfrak{a}$ in $\mathcal{O}_K$ as 
$$
\mu(\mathfrak{a}):=\begin{cases} (-1)^r & \mbox{if } \mathfrak{a} \mbox{ is a square-free ideal with exactly } r \mbox{ distinct prime ideal divisors,} \\ 0 & \mbox{otherwise}
\end{cases}
$$ 
and set
$$
\mu\left(q\right):=\mu((q))
$$
if $q\in \mathcal{O}_K$. These notations will be kept in the case of real quadratic fields $K$ in section \ref{realquadfields}.
\end{itemize}

\subsection{The case of class number $h_K=1$} \label{hK1}To prove Theorem \ref{firstmainresult},
it suffices to show that there exists a sequence of real numbers $N\ge 1$ tending to infinity such that
\begin{equation} \label{goal1}
\sum\limits_{\substack{\\p \ \text{prime element in } \mathcal{O}_K\\ N<|p|\leq 2N\\||p\alpha||_K\leq \delta}}  1 >0,
\end{equation}
where we set
\begin{equation} \label{delta1}
\delta:= N^{-1/3+\varepsilon}.
\end{equation}
Here we note that 
$$
||p\alpha||_K=|p|\min\limits_{a\in \mathcal{O}_K} \left|\alpha-\frac{a}{p}\right|  
$$
and recall that $\mathcal{N}(p)= |p|^2$. 
If $h_K=1$, then Theorem \ref{Dirichlet2} implies the existence of a sequence of elements $q\in \mathcal{O}_K$ with $\mathcal{N}(q)$ increasing to infinity for which there exists $a\in \mathcal{O}_K$ coprime to $q$ such that 
\begin{equation} \label{Diri1}
\left|\alpha -\frac{a}{q}\right|\le \frac{C}{|q|^2}.
\end{equation}
Our strategy is to take any of these $q$ and set $N:=c_0|q|^\tau$ for some suitable $c_0>0$ and $\tau >1$, to be fixed later. We then prove \eqref{goal1} for the resulting increasing and unbounded sequence of real numbers $N$, making use of \eqref{Diri1}.

Under the conditions \eqref{Diri1},
\begin{equation} \label{firstcondi}
         \frac{2CN}{|q|^2}\leq \frac{\delta}{2}
\end{equation}
 and
        \begin{equation}\label{cond1}
            pa\equiv k \bmod{q} \mbox{ for some } k\in \mathcal{O}_K \mbox{ with } (k,q)\approx 1, \ 0<|k|\leq \frac{|q|\delta}{2}, 
        \end{equation} 
we have 
\begin{equation}
    ||p \alpha||_K\leq \delta,
\end{equation} 
provided that $p\nmid q$. 
This is because 
$$\left\Vert p \alpha\right\Vert_K\le \left\Vert p \alpha- \frac{pa}{q} \right\Vert_K+\left\Vert\frac{pa}{q}\right\Vert_K\le \left|p\cdot\left(\alpha-\frac{a}{q}\right)\right|+\left\Vert\frac{pa}{q}\right\Vert_K\le \frac{2CN}{|q|^2}+\left\Vert\frac{pa}{q}\right\Vert_K$$
and 
$$
\left\Vert\frac{pa}{q}\right\Vert_K \le \frac{\delta}{2} \iff \eqref{cond1}
$$
if $p\nmid q$.
Hence, it suffices to show that
\begin{equation} \label{suff1}
S:=\sum\limits_{\substack{k\in \mathcal{O}_K\\ 0<|k|\le |q|\delta/2\\ (k,q)\approx 1}} \sum\limits_{\substack{p \text{ prime element in }\mathcal{O}_K\\ N<|p|\le 2N\\pa\equiv k \bmod{q} }} 1 >0,
\end{equation}
where we note that $p\nmid q$ if $|p|>N\ge |q|$. In fact, we shall show that
\begin{equation} \label{suff1'}
S\gg N^{4/3}.
\end{equation}
In the following, we shall tacitly assume that $k\in \mathcal{O}_K$ and $p$ is a prime element in $\mathcal{O}_K$.  

We detect the congruence condition above using Dirichlet characters modulo $q$, getting
\begin{equation}\label{S1}
S= \frac{1}{\varphi(q)}\cdot \sum\limits_{\chi \bmod q} \sum\limits_{0<|k|\le |q|\delta /2} \chi(ak^{-1})
\sum\limits_{\substack{N<|p|\le 2N}}  \chi(p).
\end{equation} 

Next, we estimate the innermost sum over $p$.  We notice that the Dirichlet character $\chi$ gives rise to a Hecke character $\chi_{\text{Hecke}}$ (defined on the fractional ideals of $\mathcal{O}_K$) satisfying $\chi_{\text{Hecke}}((p))=\chi(p)$ if $\chi$ is trivial on all units in $\mathcal{O}_K^{\ast}$. Otherwise, if $\chi(u)\not=1$ for some unit $u\in \mathcal{O}_K^{\ast}$, we have
\begin{equation} \label{canc}
\sum\limits_{\tilde{p} \approx p} \chi(\tilde{p})= \chi(p) \sum\limits_{\substack{u\in \mathcal{O}_K^* }}\chi(u) = 0
\end{equation}
for any $p\in \mathcal{O}_K$, the sum on the left-hand side running over all $\tilde{p}$ which are associates to $p$. Hence, in this case, 
\begin{equation*}
\sum\limits_{\substack{N<|p|\le 2N}}  \chi(p) =0.
\end{equation*}
If $\chi(u)=1$ for all units $u\in \mathcal{O}_K^{\ast}$, then applying Corollary \ref{GRHcharsum}, we get  
\begin{equation}\label{primesums2}
 \begin{split}
     \sum\limits_{\substack{N<|p|\le 2N}}  \chi(p)=& w\cdot \sum\limits_{\substack{\mathfrak{p}\in \mathbb{P}\\ N^2<\mathcal{N}(\mathfrak{p})\le 4N^2}}\chi_{\text{Hecke}}(\mathfrak{p})= \delta(\chi)w\cdot \int\limits_{N^2}^{4N^2} \frac{\mbox{d}t}{\log t} + O(N\log N), 
 \end{split}
 \end{equation}
where $w$ is the order of the unit group $\mathcal{O}_K^{\ast}$ and 
\begin{equation} \label{anotherdelta}
\delta(\chi)=\begin{cases} 1 & \mbox{ if } \chi=\chi_0 \mbox{ is principal,}\\ 0 & \mbox{ otherwise.} \end{cases}
\end{equation}

Now, plugging \eqref{primesums2} into \eqref{S1}, we get
\begin{equation}\label{all1}
S=  \frac{w}{\varphi(q)} \cdot \int\limits_{N^2}^{4N^2} \frac{\mbox{d}t}{\log t} \cdot  \sum\limits_{\substack{0<|k|\le |q|\delta/2 \\
(k,q)\approx 1}} 1 +
O\left(\frac{N\log N}{\varphi(q)}\cdot \sum\limits_{\chi \bmod q}
\left| \sum\limits_{0<|k|\le|q|\delta/2 } \chi(ak^{-1}) \right|\right).
\end{equation}
Using Cauchy-Schwarz and orthogonality relations for Dirichlet characters, we have
\begin{equation} \label{CS1}
\begin{split}
\sum\limits_{\chi\bmod q} \left|\sum\limits_{0<|k|\le |q|\delta/2} \chi(ak^{-1}) \right| =& \sum\limits_{\chi\bmod q} \left|\sum\limits_{0<|k|\le |q|\delta/2} \overline{\chi}(k) \right|\\
\le &
\varphi(q)^{1/2} \left(\sum\limits_{\chi \bmod{q}} \left| \sum\limits_{0<|k|\le |q|\delta/2} \overline{\chi}(k)\right|^2\right)^{1/2}\\
= & \varphi(q)^{1/2} \left(\sum\limits_{\chi \bmod{q}} \ \sum\limits_{0<|k_1|,|k_2|\le  |q|\delta/2} \overline{\chi}(k_1)\chi(k_2) \right)^{1/2}\\=& \varphi(q)^{1/2}\left(\varphi(q)\sum\limits_{\substack{0<|k_1|,|k_2|\le  |q|\delta/2\\k_1\equiv k_2 \bmod{q}}}1\right)^{1/2}\\
= & \varphi(q)\left(\sum\limits_{0<|k|\le |q|\delta/2}1\right)^{1/2}\\
\ll & \varphi(q) |q|\delta
\end{split} 
\end{equation}
where the second-last line arrives by the observation that $k_1\equiv k_2\bmod{q}$ and $0<|k_1|,|k_2|\le |q|\delta/2$ imply $k_1=k_2$ if $\delta\le 1$. So the error term in \eqref{all1} is $\ll |q|\delta N\log N$.

We detect the coprimality condition $(k,q)\approx 1$ in the main term via the relation
\begin{equation}\label{mufnct}
    \sum\limits_{\substack{\mathfrak{d}|\mathfrak{a}\\ \mathfrak{d}  \text{ integral ideal}}} \mu(\mathfrak{d})=\begin{cases} 1 & \mbox{ if } \mathfrak{a}=(1),\\ 0 & \mbox{ otherwise,} \end{cases}
\end{equation}
obtaining
\begin{equation} \label{Mobius}
\begin{split}
\sum\limits_{\substack{0<|k|\le |q|\delta/2\\ (k,q)\approx 1}} 1= & \sum\limits_{0<|k|\le |q|\delta/2}\ \sum\limits_{\substack{\mathfrak{d}|\left((k),(q)\right)}} \mu(\mathfrak{d}) \\
= &  \sum\limits_{\substack{\mathfrak{d}|(q)}} \mu(\mathfrak{d}) \cdot \sum\limits_{\substack{0<|k|\le |q|\delta/2\\ \mathfrak{d}|(k)}} 1\\ 
= & \sum\limits_{\mathfrak{d}|(q)} \mu(\mathfrak{d}) \sum\limits_{0<|c|\le |q|\delta/(2|g|)} 1,
\end{split}
\end{equation}
where $g$ is any generator of $\mathfrak{d}$.
The innermost sum over $c$ in the last line of \eqref{Mobius} counts lattice point of the form $n_1+n_2\eta$ with $(n_1,n_2)\in \mathbb{Z}$ in a circle of radius $|q|\delta/(2|g|)$. The area of a fundamental parallelogram of this lattice equals 
$$
\Im(\eta)=\begin{cases}
\sqrt{d} & \mbox{if } -d\equiv 2,3 \bmod{4},\\ \sqrt{d}/2 & \mbox{if } -d\equiv 1\bmod{4}.
\end{cases}
$$
It follows that the number of lattice points in question equals
$$
\sum\limits_{0<|c|\le |q|\delta/(2|g|)} 1=\frac{\pi|q|^2\delta^2}{4\Im(\eta)\mathcal{N}(\mathfrak{d})}+O\left(|q|\delta\right),
$$
where we note that $|g|^2=\mathcal{N}(\mathfrak{d})$. 

Using the relation 
\begin{equation}\label{relation1}
\sum\limits_{\mathfrak{d}|(q)} \frac{\mu(\mathfrak{d})}{\mathcal{N}(\mathfrak{d})}=\frac{\varphi(q)}{\mathcal{N}(q)}
\end{equation}
and the well-known bounds
\begin{equation}\label{relation2}
\sum\limits_{\mathfrak{d}|(q)} 1 \ll_{\varepsilon} \mathcal{N}(q)^{\varepsilon} \quad \mbox{and} \quad \mathcal{N}(q)^{1-\varepsilon}\ll_{\varepsilon} \varphi(q) \quad \mbox{for any } \varepsilon>0,
\end{equation}
it follows that
\begin{equation}\label{main1}
\begin{split}
\frac{1}{\varphi(q)}  \sum\limits_{\substack{0<|k|\le |q|\delta/2\\ (k,q)\approx 1}} 1= &
\frac{1}{\varphi(q)} \sum\limits_{\mathfrak{d}|(q)} \mu(\mathfrak{d}) \cdot \left( \frac{\pi|q|^2\delta^2}{4\Im(\eta){\mathcal{N}(\mathfrak{d})}}+O\left(|q|\delta\right)\right)\\
= & \frac{1}{\varphi(q)}\cdot \frac{\pi|q|^2\delta^2}{4\Im(\eta)}\sum\limits_{\mathfrak{d}|(q)}\frac{\mu(\mathfrak{d})}{\mathcal{N}(\mathfrak{d})}+O\left(\frac{|q|\delta}{\varphi(q)}\sum\limits_{\mathfrak{d}|(q)}1\right)\\
= &\frac{\pi\delta^2}{4\Im(\eta)}+ O\left(\delta |q|^{4\varepsilon -1}\right).
\end{split}
\end{equation}

Combining \eqref{all1}, \eqref{CS1} and \eqref{main1}, we get
\begin{equation*}
S = \frac{w \pi \delta^2}{4\Im(\eta)}\cdot \int\limits_{N^2}^{4N^2}\frac{\mbox{d}t}{\log t}+O\left(\delta |q|^{4\varepsilon-1}N^2+|q|\delta N\log N\right).
\end{equation*}
Now we fix $N$ in such a way that 
$$\frac{2CN}{|q|^2}=\frac{\delta}{2},$$ 
so that the inequality \eqref{firstcondi} is satisfied. 
In view of \eqref{delta1}, this amounts to setting
$$
N:=\left|\frac{q}{2\sqrt{C}}\right|^{2/(4/3-\varepsilon)},
$$
which is equivalent to
$$
|q|=2\sqrt{C}N^{2/3-\varepsilon/2}.
$$
Hence, we obtain
$$S=\frac{w \pi}{4 \Im(\eta)N^{2/3-2\varepsilon}}\int\limits_{N^2}^{4N^2}\frac{\mbox{d}t}{\log t}+O\left(N^{4/3+\varepsilon}\right)$$
if $\varepsilon$ is small enough. Now, for sufficiently large $N$, the main term on the right hand side supersedes the error term and \eqref{suff1} thus holds, which completes the proof of Theorem \ref{firstmainresult} in case when $h_K=1$.

\subsection{The case of an arbitrary class number $h_K$}\label{arb}
In this subsection, we follow the treatment of general class numbers of imaginary quadratic function fields in \cite[section 4]{BaMo}. Since many arguments are parallel in the number field case, we omit a few details and refer to \cite{BaMo} instead.

At first, we point out the extra difficulties in the case of class number greater than 1. Since the ring of integers of an imaginary quadratic number field of class number greater than $1$ is not a UFD, a greatest common divisor of two elements $a,b$ of $\mathcal{O}_K$ does not necessarily exist. Furthermore, it may happen that $a,b$ are coprime as elements of $\mathcal{O}_K$, i.e., they don't share irreducible factors, but are not coprime in the sense of ideals, i.e. gcd$((a),(b))\not\approx (1)$. (For example, this happens when $(a)$ and $(b)$ have prime ideal factorizations of the form $(a)=\mathfrak{p}_1\mathfrak{p}_2$ and $(b)=\mathfrak{p}_1\mathfrak{p}_3$, where $\mathfrak{p}_2\not=\mathfrak{p}_3$ and $\mathfrak{p}_1,\mathfrak{p_2},\mathfrak{p}_3$ are all non-principal.) This issue causes some difficulties when we apply the Dirichlet approximation theorem: In a fraction $a/q$ of elements of $\mathcal{O}_K$, it is possible to cancel out common irreducible factors, yet the ideals generated by the reduced numerator and denominator may not be coprime. To make the arguments in subsection \ref{hK1} work for the general setting of arbitrary class numbers, we need to cancel common ideal factors from $(a)$ and $(q)$. Therefore, we need to work with the ideal  $\mathfrak{q}=\mbox{gcd}((a),(q))^{-1}(q)$ in place of $(q)$. To generate an infinite sequence of $N$'s for which the desired lower bound holds, we now need to ensure that Dirichlet's approximation theorem holds with the extra condition $\mathcal{N}(\mathfrak{q})\rightarrow \infty$ included (which clearly works if $\mathcal{O}_K$ is a PID). Hence, we require the following sharpened version of Dirichlet's approximation theorem for imaginary quadratic number fields.

\begin{theorem} \label{Dirichlet3} 
Let $K$ be an imaginary quadratic number field. Then there exists a constant $C$ depending at most on $K$ such that for every $\alpha\in \mathbb{C}\setminus K$, there exists an infinite sequence of pairs  $(a,q)\in\mathcal{O}_K\times(\mathcal{O}_K\setminus\{0 \})$ such that $\mathcal{N}(\mbox{gcd}((a),(q))^{-1}(q))$ increases to infinity and 
\begin{equation}\label{Gin}
    \left|\alpha-\frac{a}{q}\right| \leq \frac{C}{\mathcal{N}(q)}.
\end{equation}
\end{theorem}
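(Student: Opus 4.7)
The plan is to deduce Theorem \ref{Dirichlet3} from Theorem \ref{Dirichlet2} by a lattice discreteness argument. First I would invoke Theorem \ref{Dirichlet2} to produce an infinite sequence of distinct fractions $a/q \in K$ with $(a,q) \in \mathcal{O}_K \times (\mathcal{O}_K \setminus \{0\})$ satisfying $|\alpha - a/q| \leq C/\mathcal{N}(q)$. Since $\mathcal{O}_K$ is a $\mathbb{Z}$-lattice in $\mathbb{C}$, there are only finitely many $q \in \mathcal{O}_K$ with $\mathcal{N}(q)$ below any fixed bound, and for each such $q$ only finitely many $a \in \mathcal{O}_K$ satisfying the approximation inequality (since $|a - q\alpha| \leq C/|q|$ constrains $a$ to a bounded disc); hence one may extract a subsequence along which $\mathcal{N}(q) \to \infty$.

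Next, I would suppose for contradiction that along this subsequence the ideal $\mathfrak{q} := \gcd((a),(q))^{-1}(q)$ stays bounded in norm. Since only finitely many integral ideals have norm below any given bound, a further subsequence can be extracted along which $\mathfrak{q}$ equals a fixed ideal $\mathfrak{q}_0$. Writing $\mathfrak{d} := \gcd((a),(q))$ and $\mathfrak{b} := \mathfrak{d}^{-1}(a)$, one has $(a/q) = (a)(q)^{-1} = \mathfrak{b}\mathfrak{q}_0^{-1} \subseteq \mathfrak{q}_0^{-1}$, so every element $a/q$ in the subsequence lies in the fractional ideal $\mathfrak{q}_0^{-1}$.

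The punchline is then discreteness: $\mathfrak{q}_0^{-1}$ is a rank-$2$ $\mathbb{Z}$-lattice in $\mathbb{C}$, and since $\alpha \in \mathbb{C} \setminus K$ while $\mathfrak{q}_0^{-1} \subseteq K$, the point $\alpha$ does not belong to this lattice, so only finitely many lattice points lie in any bounded neighbourhood of $\alpha$. But the inequality $|\alpha - a/q| \leq C/\mathcal{N}(q) \to 0$ forces infinitely many distinct lattice points $a/q$ to accumulate at $\alpha$, a contradiction. Hence $\mathcal{N}(\mathfrak{q})$ must be unbounded, and by passing to yet another subsequence one can insist that $\mathcal{N}(\mathfrak{q})$ strictly increases to infinity. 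The principal obstacle is conceptual rather than computational: one has to notice that fixing the reduced denominator ideal $\mathfrak{q}_0$ automatically confines every approximant $a/q$ to a fixed discrete lattice in $\mathbb{C}$, after which the irrationality hypothesis $\alpha \notin K$ closes the argument with no new Diophantine input beyond Theorem \ref{Dirichlet2}.
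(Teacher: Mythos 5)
Your argument is correct, but it proceeds along a genuinely different route from the paper. The paper's proof is quantitative and uses only two consecutive approximants: from $|\alpha-a_n/q_n|\le C/\mathcal{N}(q_n)$ and the same inequality for $(a_{n+1},q_{n+1})$ it bounds the norm of the cross-difference, $\mathcal{N}(a_nq_{n+1}-a_{n+1}q_n)<(2C)^2\mathcal{N}(q_{n+1})/\mathcal{N}(q_n)$, observes that $\gcd((a_{n+1}),(q_{n+1}))$ divides this element, and so obtains the explicit lower bound $\mathcal{N}\bigl(\gcd((a_{n+1}),(q_{n+1}))^{-1}(q_{n+1})\bigr)>(2C)^{-2}\mathcal{N}(q_n)$, which forces divergence. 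Your proof is instead soft: you assume the reduced denominator ideal $\mathfrak{q}$ has bounded norm, use the finiteness of integral ideals of bounded norm to pin it down to a fixed $\mathfrak{q}_0$ along a subsequence, note that then all approximants lie in the fixed fractional ideal $\mathfrak{q}_0^{-1}$ (indeed $(a/q)=\mathfrak{b}\mathfrak{q}_0^{-1}$ with $\mathfrak{b}$ integral), and contradict the discreteness of this rank-$2$ lattice in $\mathbb{C}$, since infinitely many distinct approximants converge to $\alpha$. (Strictly speaking, discreteness alone already forbids infinitely many distinct lattice points in a bounded neighbourhood of $\alpha$, whether or not $\alpha$ lies in the lattice, so the hypothesis $\alpha\notin K$ enters only through Theorem \ref{Dirichlet2} and the distinctness of the approximants.) What each approach buys: the paper's computation gives an effective growth rate for $\mathcal{N}(\mathfrak{q})$ in terms of $\mathcal{N}(q_n)$ and sidesteps any compactness/pigeonhole step, while your argument is more conceptual, requires no norm computation, and in particular never needs to discuss the cross-difference $a_nq_{n+1}-a_{n+1}q_n$ (e.g.\ its non-vanishing); for the application in the paper only the divergence $\mathcal{N}(\mathfrak{q})\to\infty$ is used, so both proofs suffice.
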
 
\begin{proof} 
Using Theorem \ref{Dirichlet2}, there exists a sequence of pairs $(a_n,q_n)\in\mathcal{O}_K\times(\mathcal{O}_K\setminus\{0 \})$ satisfying the inequalities \eqref{Gin} and $\mathcal{N}(q_n)<\mathcal{N}(q_{n+1})$ for all $n\in \mathbb{N}$. Plugging the pairs $(a,q)=(a_n,q_n), \, (a_{n+1},q_{n+1})$ into \eqref{Gin} and applying the triangle inequality, we deduce that
\begin{equation*}
\left|\frac{a_n}{q_n}-\frac{a_{n+1}}{q_{n+1}}\right| < \frac{2C}{\mathcal{N}(q_n)} =\frac{2C}{|q_n|^2} 
\end{equation*} 
and hence 
$$
\left|a_nq_{n+1}-a_{n+1}q_n\right| < 2C\cdot \frac{|q_{n+1}|}{|q_n|}
$$
upon multiplying by $|q_nq_{n+1}|$. Squaring both sides, we obtain
$$
\mathcal{N}(a_nq_{n+1}-a_{n+1}q_n) < (2C)^2\cdot \frac{\mathcal{N}(q_{n+1})}{\mathcal{N}(q_n)}.
$$
Since 
$$
\mathcal{N}\left(\gcd((a_{n+1}),(q_{n+1}))\right)\le\mathcal{N}(a_nq_{n+1}-a_{n+1}q_n),
$$
it follows that
$$
\mathcal{N}\left(\gcd((a_{n+1}),(q_{n+1}))^{-1}(q_{n+1})\right)>(2C)^{-2}\mathcal{N}(q_n).
$$
This shows that the sequence $\mathcal{N}(\gcd((a_n),(q_n))^{-1}(q_n))$ diverges to infinity, which completes the proof. 
\end{proof}
Another issue that comes up in the case of class number greater than 1 is that we need to pick out principal prime ideals (i.e., ideals generated by prime elements in $\mathcal{O}_K$) from the set of all prime ideals. This can be conveniently done using class group characters. 

We start as in the previous section, aiming to show that \eqref{goal1} holds
for an infinite sequence of positive real numbers $N$ tending to infinity. Here we use Theorem \ref{Dirichlet3} to approximate $\alpha$ in the form
\begin{equation}\label{eq2}
    \left|\alpha-\frac{a}{q}\right|\le\frac{C}{|q|^2}
\end{equation}
with $(a,q)\in \mathcal{O}_K\times (\mathcal{O}_K\setminus\{0\})$ and some constant $C>0.$ Throughout the following, we shall write 
$$
\mathfrak{D}:=\mbox{gcd}((a),(q))\quad \mbox{and} \quad \mathfrak{q}:=\mathfrak{D}^{-1}(q).
$$ 
We note that \eqref{eq2} implies that
\begin{equation}\label{eq3}
     \left|\alpha-\frac{a}{q}\right|\le\frac{C}{\mathcal{N}(\mathfrak{q})}.
\end{equation}
The parameter $N$ will later be fixed as a function of $\mathcal{N}(\mathfrak{q})$, where $\log \mathcal{N}(\mathfrak{q})\le \log N$. We define $\delta$ as in the previous subsection by \eqref{delta1}.
Now we observe that \eqref{eq3} implies $$||\alpha p||_K\le \delta$$ for a prime element $p\in \mathcal{O}_K$, provided that $(p)\nmid \mathfrak{q}$, 
\begin{equation}\label{eq5}
    \frac{2CN}{\mathcal{N}(\mathfrak{q})}\le \frac{\delta}{2}
\end{equation}
and
$$
pa\equiv k \bmod{q} \mbox{ for some } k\in\mathcal{O}_K \mbox{ with } \mathfrak{C}=\mathfrak{D} \mbox{ and } 0<|k|\le\frac{|q|\delta}{2},
$$
where we set 
$$
\mathfrak{C}:=\mbox{gcd}((k),(q)).
$$
Hence, to prove Theorem \ref{firstmainresult}, it is enough to show that 
\begin{equation}\label{eq6}
    S:=\sum\limits_{\substack{0<|k|\le |q|\delta/2\\\mathfrak{C}=\mathfrak{D}}}~\sum\limits_{\substack{p \text{ prime element }\\ (p)\nmid \mathfrak{q} \\ N<|p|\le 2N\\ pa\equiv k \bmod{q}}} 1 >0.
\end{equation}
(Again, we shall establish the stronger inequality $S\gg N^{4/3}$.) Since $(a)$ and $(q)$ are generally not coprime ideals, we need to turn the congruence $pa\equiv k \bmod{q}$ into a multiplicative congruence $pak^{-1} \equiv^{\ast} 1 \bmod \mathfrak{q}$ before detecting it using Dirichlet characters. (For the details of this process, see \cite[equation (4.8)]{BaMo}.) Note that the fractional ideal $(ak^{-1})$ is coprime to $\mathfrak{q}$, whereas the integral ideals $(a)$ and $(k)$ are not necessarily. As a result, $S$ turns into the form
\begin{equation*}
    S=\sum\limits_{\substack{0<|k|\le |q|\delta/2\\ \mathfrak{C}=\mathfrak{D}}}~\sum\limits_{\substack{p \text{ prime element }\\ (p)\nmid \mathfrak{q}\\ N<|p|\le 2N\\pak^{-1}\equiv^* 1 \bmod \mathfrak{q}}} 1.
 \end{equation*}

Next, we use orthogonality relations for Dirichlet characters modulo $\mathfrak{q}$ to write the above sum as
\begin{equation*}
 S=\frac{1}{\varphi(\mathfrak{q})}\sum\limits_{\chi \bmod \mathfrak{q}} \sum\limits_{\substack{0<|k|\le |q|\delta/2\\\mathfrak{C}=\mathfrak{D}}} \chi((ak^{-1}))\sum\limits_{\substack{ p \text { prime element }\\ N<|p|\le 2N}}\chi(p).
 \end{equation*}
We denote by $\mathbb{P}_0$ the set of non-zero principal prime ideals. These are exactly the prime ideals which are generated by prime elements. As in the previous subsection, we observe that 
$$
\sum\limits_{\substack{ p \text { prime element }\\ N<|p|\le 2N}}\chi(p) = \begin{cases} 0 & \mbox{ if } \chi(u)\not=1 \mbox{ for some unit } u\in \mathcal{O}_K^{\ast},\\ 
w  \cdot \sum\limits_{\substack{\mathfrak{p}\in \mathbb{P}_0\\ N^2<\mathcal{N}(\mathfrak{p})\le 4N^2}}\chi_{\text{Hecke}}(\mathfrak{p}) & \mbox{ otherwise,} \end{cases}
$$
where $w$ is the number of units in $\mathcal{O}_K$ and $\chi_{\text{Hecke}}$ is a Hecke character modulo $\mathfrak{q}$ with trivial infinity part belonging to $\chi$. The principality of $\mathfrak{p}$ can be picked out using orthogonality relations for class group characters $\psi$, getting 
$$
\sum\limits_{\substack{\mathfrak{p}\in \mathbb{P}_0\\ N^2<\mathcal{N}(\mathfrak{p})\le 4N^2}}\chi_{\text{Hecke}}(\mathfrak{p})= \frac{1}{h_K} \sum\limits_{\psi\in \hat{\mathcal{C}}} \sum\limits_{\substack{\mathfrak{p}\in \mathbb{P}\\ N^2<\mathcal{N}(\mathfrak{p})\le 4N^2}} \psi\chi_{\text{Hecke}}(\mathfrak{p}), 
$$
where $\hat{\mathcal{C}}$ is the character group for the class group $\mathcal{C}$ of $K$ and $\mathbb{P}$ is the set of {\it all} non-zero prime ideals. Using Corollary \ref{GRHcharsum}, we deduce that 
$$
\sum\limits_{\substack{ p \text { prime element }\\ N<|p|\le 2N}}\chi(p)=\delta(\chi) \cdot \frac{w}{h_K} \cdot \int\limits_{N^2}^{4N^2} \frac{\mbox{d}t}{\log t} + O\left(N\log N\right)
$$
under GRH, with $\delta(\chi)$ defined as in \eqref{anotherdelta}. Hence, altogether we obtain
  \begin{equation}\label{SS2}
      S=  \frac{w}{h_k\varphi(\mathfrak{q})} \cdot \int\limits_{N^2}^{4N^2} \frac{\mbox{d}t}{\log t} \cdot  \sum\limits_{\substack{0<|k|\le |q|\delta/2 \\
\mathfrak{C}=\mathfrak{D}}} 1 +
O\left(\frac{N\log N }{\varphi(\mathfrak{q})}\cdot \sum\limits_{\chi \bmod{\mathfrak{q}}}
\left| \sum\limits_{\substack{0<|k|\le|q|\delta/2\\ \mathfrak{C}=\mathfrak{D} }} \chi(ak^{-1}) \right|\right).
  \end{equation}
For an estimation of the double sum in the error term, we refer the reader to \cite[subsection 4.9]{BaMo}. This calculation generalizes the estimation of the corresponding double sum in \eqref{CS1} in the previous subsection and gives
\begin{equation}\label{errorterm}
    \sum\limits_{\chi \bmod{\mathfrak{q}}}
\left| \sum\limits_{\substack{0<|k|\le|q|\delta/2\\ \mathfrak{C}=\mathfrak{D} }} \chi(ak^{-1}) \right| \ll \varphi(\mathfrak{q})\mathcal{N}(\mathfrak{q})^{1/2}\delta.
\end{equation}
In contrast to the situation in the previous subsection, we cannot immediately separate $a$ and $k$ because the ideals $(a)$ and $(k)$ may not be coprime to $\mathfrak{q}$, as remarked earlier in this subsection. Nevertheless, an application of the Cauchy-Schwarz inequality and orthogonality relations for Dirichlet characters leads to a congruence of the form $k_1k_2^{-1}\equiv^{*} 1 \bmod{\mathfrak{q}}$ which no longer contains $a$. As we demonstrated in \cite[subsection 4.9]{BaMo}, this is equivalent to $k_1\equiv k_2 \bmod{q}$. Now we can apply the same argument as in the previous subsection to show that this congruence implies equality of $k_1$ and $k_2$ under the relevant summation conditions, thus establishing \eqref{errorterm}. We point out that in \cite[subsection 4.9]{BaMo}, we were led to the corresponding sum over all Hecke characters modulo $\mathfrak{q}$ with trivial infinity part in place of a sum of all Dirichlet characters modulo $\mathfrak{q}$, but this amounts to the same since again the inner-most sum over $k$ vanishes if $\chi$ is not trivial on the units.  

We write the sum in the main term as 
\begin{equation} \label{mainterm}
\begin{split}
\sum\limits_{\substack{0<|k|\le |q|\delta/2\\
\mathfrak{C}=\mathfrak{D}}} 1= & \sum\limits_{\substack{0<\mathcal{N}(k)\le \mathcal{N}(q)\delta^2/4\\ \mathfrak{D}|(k)\\ \mbox{gcd}(\mathfrak{D}^{-1}(k),\mathfrak{q})=1}}1 \\
= & \sum\limits_{\mathfrak{d}|\mathfrak{q}}\mu(\mathfrak{d})\sum\limits_{\substack{0<\mathcal{N}(k)\le \mathcal{N}(q)\delta^2/4\\ \mathfrak{Dd}|(k)}}1,
\end{split}
\end{equation}
where we use \eqref{mufnct} to obtain the last line. Further, writing the principal ideal $(k)$ as $(k)=\mathfrak{Dda}$ and using $\mathcal{N}(q)=\mathcal{N}(\mathfrak{Dq})$, we transform the innermost sum in the last line above into
       \begin{equation}\label{countingideals0}
\sum\limits_{\substack{0<\mathcal{N}(k)\le \mathcal{N}(q)\delta^2/4\\ \mathfrak{Dd}|(k)}} 1 =\sum\limits_{\substack{\mathfrak{a} \text{ integral ideal}\\ \mathfrak{Dda}\text{ principal}\\ 0<\mathcal{N}(\mathfrak{a})\le \mathcal{N}(\mathfrak{q})\delta^2/(4\mathcal{N}(\mathfrak{d}))}}1.
         \end{equation}
       We pick out the condition of $\mathfrak{Dda}$ being principal again using orthogonality relations for class group characters, getting 
       \begin{equation}\label{countingideals1}
\sum\limits_{\substack{\mathfrak{a} \text{ integral ideal}\\ \mathfrak{Dda} \text{ principal}\\ 0<\mathcal{N}(\mathfrak{a})\le \mathcal{N}(\mathfrak{q})\delta^2/(4\mathcal{N}(\mathfrak{d}))}}1 = \frac{1}{h_K}\sum\limits_{\psi \in \hat{\mathcal{C}}}\psi(\mathfrak{Dd})\sum\limits_{\substack{\mathfrak{a} \text{ integral ideal}\\ 0<\mathcal{N}(\mathfrak{a})\le \mathcal{N}(\mathfrak{q})\delta^2/(4\mathcal{N}(\mathfrak{d}))}}\psi(\mathfrak{a}),
       \end{equation}
where $\hat{\mathcal{C}}$ is the character group of the class group $\mathcal{C}$ of $K$. 
By Corollary \ref{classgroupchar}, we have
       \begin{equation}\label{countingideals2}
        \sum\limits_{\substack{\mathfrak{a} \text{ integral ideal}\\ 0<\mathcal{N}(\mathfrak{a})\le X}}\psi(\mathfrak{a}) = \begin{cases}
A_KX+O\left(X^{1/3}\right) & \mbox{ if } \psi \mbox{ is principal,}\\ O\left(X^{1/3}\right) & \mbox{ otherwise} \end{cases}
       \end{equation}
for $X\ge 1$, where $A_K$ is a positive constant depending only on the number field $K$. Now combining \eqref{countingideals0}, \eqref{countingideals1} and \eqref{countingideals2}, we deduce that
       \begin{equation}\label{countingideals3}
       \sum\limits_{\substack{0<\mathcal{N}(k)\le \mathcal{N}(q)\delta^2/4\\ \mathfrak{Dd}|(k)}} 1 = \frac{A_K}{h_K}\cdot \frac{\mathcal{N}(\mathfrak{q})\delta^2}{4\mathcal{N}(\mathfrak{d})} +O\left(\frac{\mathcal{N}(\mathfrak{q})^{1/3}\delta^{2/3}}{\mathcal{N}(\mathfrak{d})^{1/3}}\right).
       \end{equation}

Combining \eqref{mainterm} and \eqref{countingideals3}, and using the relations \eqref{relation1} and \eqref{relation2}, we get
\begin{equation}\label{SS20}
\sum\limits_{\substack{0<|k|\le |q|\delta/2\\
\mathfrak{C}=\mathfrak{D}}} 1= \frac{A_K}{4h_K}\cdot \varphi(\mathfrak{q})\delta^2 + O\left(\mathcal{N}(\mathfrak{q})^{1/3+\varepsilon}\delta^{2/3}\right).
  \end{equation}
Plugging \eqref{errorterm} and \eqref{SS20} into \eqref{SS2}, and recalling \eqref{relation2}, it follows that
\begin{equation} \label{Sfinal}
 S=  \frac{wA_K\delta^2}{4h_K^2} \cdot \int\limits_{N^2}^{4N^2} \frac{\mbox{d}t}{\log t} 
+
O\left(\frac{N^2\delta^{2/3}}{\mathcal{N}(\mathfrak{q})^{2/3-2\varepsilon}} + N(\log N)\mathcal{N}(\mathfrak{q})^{1/2+\varepsilon}\delta \right).
\end{equation}

Now we fix $N$ in such a way that 
$$
\frac{2CN}{\mathcal{N}(\mathfrak{q})}= \frac{\delta}{2},
$$
so that the inequality \eqref{eq5} is satisfied. In view of our choice $\delta=N^{-1/3+\varepsilon}$, this amounts to setting
$$
N:=\left|\frac{\mathcal{N}(\mathfrak{q})}{4C}\right|^{1/(4/3-\varepsilon)},
$$
which is equivalent to
$$
\mathcal{N}(\mathfrak{q})=4CN^{4/3-\varepsilon}.
$$
Hence, we obtain
$$
S= \frac{wA_K}{4h_K^2}\cdot \frac{1}{N^{2/3-2\varepsilon}}\cdot\int\limits_{N^2}^{4N^2} \frac{\mbox{d}t}{\log t}+O\left(N^{4/3+11\varepsilon/6}\log N\right)
$$
if $\varepsilon$ is small enough. For sufficiently large $N$, the main term on the right-hand side supersedes the error term and \eqref{eq6} thus holds, which completes the proof of Theorem \ref{firstmainresult}.

\section{Proof of Theorem \ref{secondmainresult}} \label{realquadfields}
In this section, we shall assume that $K$ is a real quadratic field and prove Theorem \ref{secondmainresult}, our second main result, for this case. Again, we handle the easier case when the class number equals 1 first. Afterwards, we extend our proof to the general case of an arbitrary class number.  

The main difficulty in the real quadratic case lies in the infinitude of the unit group $\mathcal{O}_K^{\ast}$. In particular, every non-zero element of $\mathcal{O}_K$ has infinitely many associates, and therefore, a sum as in \eqref{suff1} does not make sense anymore. This makes it necessary to restrict the relevant elements of $\mathcal{O}_K$ carefully. For the same reason, we do not get a cancellation as in \eqref{canc} any longer. As as result, it is not possible to restrict ourselves to Dirichlet characters which are trivial on the units. Instead, we need to work with all Dirichlet characters, and thus the Hecke characters belonging to them may no longer have trivial infinity part. These issues are resolved in this section. It is conceivable that a similar treatment can be applied to the function field case, which would improve the results obtained by the first- and third-named authors in \cite{BaMo2}. In the said paper, they used Vaughan's identity and exponential sums instead of Hecke $L$-functions to treat real quadratic function fields.

\subsection{Preliminaries about real quadratic fields}
We recall that by the Dirichlet unit theorem for the case of real quadratic fields $K$, the unit group $\mathcal{O}_K^{\ast}$ is generated by a fundamental unit $\epsilon>1$ and $-1$. We note that $\sigma_2(\epsilon)=\pm \epsilon^{-1}$ since
$$
|\epsilon \sigma_2(\epsilon)|=|\sigma_1(\epsilon)\sigma_2(\epsilon)|=\mathcal{N}(\epsilon)=1,
$$
where $\sigma_1$ and $\sigma_2$ are the two embeddings of $K$, the identity and conjugation, respectively.  To restrict generators of principal ideals suitably, we shall use the following lemma. 

\begin{lemma} \label{sigma12}
Let $\mathfrak{a}\subseteq\mathcal{O}_K$ be a principal non-zero ideal. Then there exists a generator $a$ of $\mathfrak{a}$, unique up to the sign, such that
$$
\epsilon^{-1}|\sigma_1(a)|<|\sigma_2(a)|\le \epsilon|\sigma_1(a)|,
$$
where $\epsilon$ is the fundamental unit in $\mathcal{O}_K$. In this case, we have
$$
\epsilon^{-1/2}\mathcal{N}(\mathfrak{a})^{1/2}\le |\sigma_i(a)|\le \epsilon^{1/2} \mathcal{N}(\mathfrak{a})^{1/2} \quad \mbox{for } i=1,2. 
$$ 
\end{lemma}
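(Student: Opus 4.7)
The plan is to exploit the fact that two generators of the same principal ideal differ by a unit, combined with the Dirichlet unit theorem for real quadratic fields (already recalled just before the lemma), which tells us that every unit in $\mathcal{O}_K^{\ast}$ has the form $\pm\epsilon^n$ with $n\in\mathbb{Z}$. Fix an arbitrary generator $a_0$ of $\mathfrak{a}$; then every generator of $\mathfrak{a}$ is of the form $\pm\epsilon^n a_0$. Passing from $a$ to $-a$ preserves both $|\sigma_1(a)|$ and $|\sigma_2(a)|$, so the lemma is really about the $\mathbb{Z}$-orbit of $a_0$ under multiplication by $\epsilon$.

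Next, I would track the ``shape'' of $a$ via the quantity
$$
R(a):=\log\frac{|\sigma_2(a)|}{|\sigma_1(a)|}.
$$
Since $\sigma_1(\epsilon)=\epsilon$ and, as noted before the lemma, $|\sigma_2(\epsilon)|=\epsilon^{-1}$, multiplication of $a$ by $\epsilon$ replaces $R(a)$ by $R(a)-2\log\epsilon$. The desired inequality $\epsilon^{-1}|\sigma_1(a)|<|\sigma_2(a)|\le\epsilon|\sigma_1(a)|$ is equivalent to $R(a)\in(-\log\epsilon,\log\epsilon]$, a half-open interval of length $2\log\epsilon$. Since the values $R(\epsilon^n a_0)=R(a_0)-2n\log\epsilon$ run through an arithmetic progression with common difference $2\log\epsilon$, exactly one integer $n$ lands in this interval. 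This $n$ gives the required generator, and the sign ambiguity is the only remaining ambiguity; this proves existence and uniqueness up to sign.

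For the second assertion, I would use the identity $|\sigma_1(a)|\cdot|\sigma_2(a)|=\mathcal{N}(a)=\mathcal{N}(\mathfrak{a})$. Combined with $\epsilon^{-1}<|\sigma_2(a)|/|\sigma_1(a)|\le\epsilon$, this gives
$$
\epsilon^{-1}\mathcal{N}(\mathfrak{a})\le|\sigma_1(a)|^{2}<\epsilon\,\mathcal{N}(\mathfrak{a}),
$$
and symmetrically for $|\sigma_2(a)|^{2}$, yielding the claimed two-sided bound $\epsilon^{-1/2}\mathcal{N}(\mathfrak{a})^{1/2}\le|\sigma_i(a)|\le\epsilon^{1/2}\mathcal{N}(\mathfrak{a})^{1/2}$.

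There is really no serious obstacle here: the only point requiring a moment's care is the bookkeeping that keeps the interval for $R(a)$ half-open so that the selected integer $n$, and hence the generator up to sign, is genuinely unique. Everything else is a direct consequence of the multiplicativity of $\sigma_1,\sigma_2$ and of the structure of $\mathcal{O}_K^{\ast}$.
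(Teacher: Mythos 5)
Your proposal is correct and follows essentially the same route as the paper: the paper also fixes a generator $a_0$, considers the quantity $\nu=\frac{\log|\sigma_2(a_0)|-\log|\sigma_1(a_0)|}{2\log\epsilon}$ (which is your $R(a_0)/(2\log\epsilon)$), picks the unique integer $m$ with $-1/2<\nu-m\le 1/2$, and then deduces the norm bounds from $\mathcal{N}(\mathfrak{a})=|\sigma_1(a)\sigma_2(a)|$. Your half-open-interval bookkeeping for $R(a)$ is just a reparametrization of that argument, and your algebra for the two-sided bound is correct.
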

\begin{proof}
All generators of $\mathfrak{a}$ are of the form $\pm\epsilon^na_0$, where $a_0$ is a fixed generator, and $n$ runs over the integers. For every $n\in\mathbb{Z}$, we have 
$$
\sigma_1(\epsilon^na_0)=\epsilon^n\sigma_1(a_0)
$$
and 
$$
\sigma_2(\epsilon^na_0)=\pm \epsilon^{-n}\sigma_2(a_0).
$$
Suppose that 
$$
|\epsilon^\nu\sigma_1(a_0)|=|\epsilon^{-\nu}\sigma_2(a_0)|
$$
for some real number $\nu\in\mathbb{R}$.
This real number $\nu$ is uniquely given by 
$$
\nu=\frac{\log|\sigma_2(a_0)|-\log|\sigma_1(a_0)|}{2\log\epsilon}.
$$
Let $m$ be the unique integer satisfying $-1/2<\nu-m\le1/2$. Then we obtain
$$
\epsilon^{-1}<\frac{|\sigma_2(\epsilon^{m}a_0)|}{|\sigma_1(\epsilon^ma_0)|} =\epsilon^{-2m}\frac{|\sigma_2(a_0)|}{|\sigma_1(a_0)|}=\epsilon^{2(\nu-m)}\le\epsilon.
$$
Choosing $a=\epsilon^ma_0$, the above inequality becomes
$$
\epsilon^{-1}|\sigma_1(a)|<|\sigma_2(a)|\le \epsilon|\sigma_1(a)|.
$$
This inequality is not satisfied if $a$ is replaced by $\pm \epsilon^na$ for any $n\in \mathbb{Z}\setminus \{0\}$. Therefore, $a$ is unique up to sign. Moreover,  
since $\mathcal{N}(\mathfrak{a})=|\sigma_1(a)\sigma_2(a)|$, from the above considerations, we deduce the following inequalities
$$
\epsilon^{-1/2}\mathcal{N}(\mathfrak{a})^{1/2}\le |\sigma_i(a)|\le \epsilon^{1/2} \mathcal{N}(\mathfrak{a})^{1/2} \quad \mbox{for } i=1,2. 
$$ 
\end{proof}

\subsection{The case of class number $h_K=1$} \label{hK1new}

\subsubsection{Reduction to prime elements in residue classes}
If $h_K=1$, then Theorem \ref{Dirichlet1} implies the existence of a sequence of elements $q\in \mathcal{O}_K$ with $\mathcal{N}(q)$ increasing to infinity for which there exists $a\in \mathcal{O}_K$ coprime to $q$ such that 
\begin{equation} \label{Diri}
\left|x_i-\frac{\sigma_i(a)}{\sigma_i(q)}\right| \le \frac{C}{\mathcal{N}(q)} \quad \mbox{ for } i=1,2.
\end{equation}
Again, our strategy is to take any of these $q$ and set 
\begin{equation} \label{Ndef}
N:=\mathcal{N}(q)^\tau
\end{equation}
for a suitable $\tau\in (1,2)$. In this way, we get an increasing sequence of $N$'s tending to infinity. For these $N$'s, we will prove that there is a prime element $p$ with $N<\mathcal{N}(p)\le 2N$ and $a\in \mathcal{O}_K$ satisfying the conditions in Theorem \ref{secondmainresult}, thus establishing the desired result. 

In the following, we set
\begin{equation} \label{Deltadef}
\Delta:=\frac{C}{\mathcal{N}(q)}
\end{equation}
so that
\begin{equation} \label{Delta}
\left|x_i-\frac{\sigma_i(a)}{\sigma_i(q)}\right| \le \Delta \quad \mbox{ for } i=1,2
\end{equation}
by \eqref{Diri}. 
We aim to count prime ideals $\mathfrak{p}$ with norm $\mathcal{N}(\mathfrak{p})\in (N,2N]$ such that 
\begin{equation} \label{aim}
\left| x_i-\frac{\sigma_i(b)}{\sigma_i(p)}\right|\le 2\Delta \quad \mbox{for } i=1,2 
\end{equation}
for some generator $p\in \mathcal{O}_K$ of $\mathfrak{p}$ and a suitable $b\in\mathcal{O}_K$. In view of \eqref{Delta}, \eqref{aim} is satisfied if 
\begin{equation}\label{SigmaApprox}
\left|\frac{\sigma_i(a)}{\sigma_i(q)}-\frac{\sigma_i(b)}{\sigma_i(p)}\right| \le \Delta \quad \mbox{for } i=1,2.    
\end{equation}
We see that the choice 
\begin{equation}\label{choice}
\tau:=\frac{1}{2/3-\varepsilon}
\end{equation}
in \eqref{Ndef} yields Theorem \ref{secondmainresult}, provided we find at least one prime ideal $\mathfrak{p}$ satisfying the above if $\mathcal{N}(q)$ is large enough.  

Using Lemma \ref{sigma12}, we may assume without loss of generality that 
$$
\epsilon^{-1/2}\mathcal{N}(q)^{1/2}\le |\sigma_i(q)| \le \epsilon^{1/2}\mathcal{N}(q)^{1/2} \quad \mbox{for } i=1,2
$$
and 
\begin{equation}\label{sigmaqrel}
\epsilon^{-1}|\sigma_1(p)|<|\sigma_2(p)|\le \epsilon|\sigma_1(p)|
\end{equation}
so that 
\begin{equation} \label{sigmaibound}
\epsilon^{-1/2}\mathcal{N}(p)^{1/2}\le |\sigma_i(p)|\le \epsilon^{1/2}\mathcal{N}(p)^{1/2} \quad \mbox{for } i=1,2.
\end{equation}
(Otherwise, multiply both the numerator and denominator with a suitable unit $u$, respectively.)
Under this assumption, \eqref{SigmaApprox} is satisfied if 
\begin{equation} \label{Ybound}
\left|\sigma_i(a)\sigma_i(p) - \sigma_i(b)\sigma_i(q)\right|=\left|\sigma_i(ap-bq)\right| \le Y \quad \mbox{for } i=1,2,     
\end{equation}
where
$$
Y=\epsilon^{-1}\mathcal{N}(pq)^{1/2}\Delta. 
$$

Set $k=ap-bq$. We restrict our count to those $k$'s for which 
$$
\epsilon^{-1}|\sigma_1(k)|< |\sigma_2(k)|\le \epsilon |\sigma_1(k)|.
$$
This is acceptable since we are just interested in a non-trivial lower bound. In this case, \eqref{Ybound} is satisfied if 
\begin{equation} \label{Y0def}
|k|\le  Y_0:=\epsilon^{-1}Y=\epsilon^{-2}\mathcal{N}(pq)^{1/2}\Delta.
\end{equation}
In addition, we restrict ourselves to $k$'s such that gcd$(k,q)\approx 1$. We note that $k=ap-bq$ for some $b\in \mathcal{O}_K$ iff $k\equiv ap \bmod{q}$. 

So set
\begin{equation} \label{thedefofSq}
\mathcal{S}(q):=\left\{k\in \mathcal{O}_K : \epsilon^{-1}|\sigma_1(k)|< |\sigma_2(k)|\le \epsilon |\sigma_1(k)|, \ |k|\le Y_0,  \ \mbox{gcd}(k,q)\approx1\right\}
\end{equation}
and 
\begin{equation} \label{thedefofPq}
\mathcal{P}(q):=\{p\in \mathcal{O}_K \mbox{ prime element}:   p>0, \ p\nmid q, \ N<\mathcal{N}(p)\le 2N, \ \epsilon^{-1}|\sigma_1(p)|< |\sigma_2(p)| \le \epsilon|\sigma_1(p)|\}.
\end{equation}
We note that by Lemma \ref{sigma12}, for every prime ideal $\mathfrak{p}\nmid (q)$ satisfying $N<\mathcal{N}(\mathfrak{p})\le 2N$, there is exactly one generator $p$ of $\mathfrak{p}$ in the set $\mathcal{P}(q)$. We also note that the inequality $\epsilon^{-1}|\sigma_1(p)|< |\sigma_2(p)| \le \epsilon|\sigma_1(p)|$ implies \eqref{sigmaibound}.   
Now to prove our desired result, it suffices to establish that 
\begin{equation} \label{thatiswhatwewant}
S:=\sum\limits_{k\in \mathcal{S}(q)}\sum\limits_{\substack{p\in \mathcal{P}(q)\\ ap\equiv k \bmod{q}}} 1 >0.
\end{equation}
In fact, we shall show that $S\gg N^{2/3}$. Here we point out that this power $2/3$ is half of the power $4/3$ in the corresponding inequality \eqref{suff1'}, the reason being that the parameter $N$ in the present subsection \ref{hK1new} corresponds to $N^2$ in subsection \ref{hK1}. We could have streamlined this, but the respective choices of $N$ in subsections \ref{hK1} and \ref{hK1new} appear most natural to the authors. 

\subsubsection{Reduction to sums involving Dirichlet characters}
We detect the congruence condition above using Dirichlet characters modulo $q$, getting 
\begin{equation*}
S=\frac{1}{\varphi(q)}\cdot \sum\limits_{\chi \bmod q}  \sum\limits_{k\in \mathcal{S}(q)}\chi(ak^{-1})  \sum\limits_{p\in\mathcal{P}(q)}  \chi(p).
\end{equation*}
For every Dirichlet character modulo $q$, let $\chi_{\text{Hecke}}$ be a Hecke character belonging to $\chi$. Then recalling \eqref{Heckechar}, the above becomes
\begin{equation} \label{T ChiInfty}
S=\frac{1}{\varphi(q)}\cdot \sum\limits_{\chi \bmod{q}} \sum\limits_{k\in \mathcal{S}(q)} \chi(ak^{-1})\sum\limits_{\substack{\mathfrak{p}\in\mathbb{P}(q)\\ N<\mathcal{N}(\mathfrak{p})\le 2N}} \chi_{\text{Hecke}}(\mathfrak{p})\overline{\chi}_\infty(p),
\end{equation}
where $\mathbb{P}(q)$ is the set of non-zero prime ideals not dividing $(q)$, and $p$ is the unique generator of $\mathfrak{p}$ in $\mathcal{P}(q)$.  Recalling \eqref{wdef}, \eqref{infinitypart} and $p>0$, we have
$$
\overline{\chi}_\infty(p)=\mbox{sgn}(\sigma_2(p))^{-u_2(\chi)} \left|\frac{\sigma_1(p)}{\sigma_2(p)} \right|^{-i \pi (n(\chi) +\gamma(\chi))/\log \epsilon},
$$
where $u_2(\chi)\in \{0,1\}$, $n(\chi)\in \mathbb{Z}$ and 
$$
\gamma(\chi)=\frac{l}{2}-\frac{\arg(\chi(\epsilon))}{2\pi} 
$$
with 
$$
l:=\begin{cases} 0 & \mbox{ if } \mbox{sgn}\left(\sigma_2(\epsilon)\right)^{u_2}=1,\\
1 & \mbox{ if } \mbox{sgn}\left(\sigma_2(\epsilon)\right)^{u_2}=-1. \end{cases}
$$
We are free to choose $n(\chi)=0$ and $u_2(\chi)=0$, in which case $l=0$. Thus, the infinity part takes the form 
$$
\overline{\chi}_\infty(p)=\left| \frac{\sigma_1(p)}{\sigma_2(p)} \right|^{i\arg(\chi(\epsilon))/(2\log\epsilon)},
$$
and hence the inner sum in \eqref{T ChiInfty} becomes 
\begin{equation} \label{transformed}
\sum\limits_{\substack{\mathfrak{p}\in\mathbb{P}(q)\\ N<\mathcal{N}(\mathfrak{p})\le 2N}} \chi_{\text{Hecke}}(\mathfrak{p})\overline{\chi}_\infty(p)=\sum\limits_{\substack{\mathfrak{p}\in\mathbb{P}(q)\\ N<\mathcal{N}(\mathfrak{p})\le 2N}} \chi_{\text{Hecke}}(\mathfrak{p})\left| \frac{\sigma_1(p)}{\sigma_2(p)} \right|^{i\arg(\chi(\epsilon))/(2\log\epsilon)}.
\end{equation}

The summands are generally not Hecke characters. Our goal is to approximate them by a linear combination of Hecke characters. To this end, we will approximate the terms $\left| \frac{\sigma_1(p)}{\sigma_2(p)} \right|^{i\arg(\chi(\epsilon))/(2\log\epsilon)}$ by linear combinations of terms of the form $\psi^n(p)$, where 
$$
\psi(\alpha)=\left| \frac{\sigma_1(\alpha)}{\sigma_2(\alpha)} \right|^{i\pi /\log\epsilon}\quad \mbox{for } \alpha\in K.
$$ 
We note that $\psi(\alpha)=\psi(\alpha u)$ for any unit $u$ in $\mathcal{O}_K$, and thus 
$$
\psi_{\text{Hecke}}((\alpha)):=\psi(\alpha)
$$ 
is well-defined and a Hecke character modulo (1) on the principal ideals $(\alpha)$. In this way, we are able to reduce the right-hand side of \eqref{transformed} to sums 
$$
\sum\limits_{\substack{\mathfrak{p}\in\mathbb{P}(q)\\ N<\mathcal{N}(\mathfrak{p})\le 2N}} (\chi_{\text{Hecke}}\psi_{\text{Hecke}}^n)(\mathfrak{p})
$$
of Hecke characters over prime ideals. To estimate these sums, we will use Corollary \ref{GRHcharsum}.

\subsubsection{Reduction to linear combinations of Hecke characters}
Set
$$
\theta:=\frac{\arg(\chi(\epsilon))}{2\pi}\in \bigg(-\frac{1}{2},\frac{1}{2}\bigg]
$$
and
\begin{equation} \label{Zdef}
Z=Z(p):=\frac{\log|\sigma_1(p)/\sigma_2(p)|}{2\log \epsilon}.
\end{equation}
Using \eqref{sigmaqrel}, we have 
$$
\epsilon^{-1}\le \left| \frac{\sigma_1(p)}{\sigma_2(p)} \right|<\epsilon
$$
and hence 
$$
-1/2\le Z< 1/2.
$$
Our aim is to approximate 
\begin{equation} \label{simply}
\left| \frac{\sigma_1(p)}{\sigma_2(p)} \right|^{i\arg(\chi(\epsilon))/(2\log\epsilon)} =\exp\left(2\pi i \theta Z\right)=e(\theta Z)
\end{equation}
by a linear combination of
\begin{equation} \label{psin}
\psi^n(p)=\left| \frac{\sigma_1(p)}{\sigma_2(p)} \right|^{i\pi n/\log\epsilon}=
\exp\left(2\pi inZ\right)=e(nZ).
\end{equation}
Indeed, in $(-1/2,1/2)$, we may write $e(\theta Z)$ as a Fourier series
\begin{equation} \label{fourier}
e(\theta Z)= \sum\limits_{n\in\mathbb{Z}} a_ne(nZ),
\end{equation}
where 
\begin{equation} \label{an}
\begin{split}
a_n&= \int\limits_{-1/2}^{1/2}e((\theta-n)z)\mbox{d}z\\
&=\frac{1}{2\pi i(\theta-n)}\cdot\left(e\left(\frac{\theta-n}{2}\right)-e\left(\frac{n-\theta}{2}\right)\right)\\
&=\frac{e(-n/2)}{\pi (\theta-n)}\cdot \sin(\pi \theta),
\end{split}
\end{equation} 
taking into account that $e(n/2)=e(-n/2)$ for $n\in \mathbb{Z}$. 
The second and third lines of equation \eqref{an} above are interpreted to equal 1 if $n=0=\theta$, i.e.
$$
a_0=1 \quad \mbox{if } \theta=0.
$$

We would like to cut off the Fourier series \eqref{fourier} at an appropriate point. To this end, we establish the following approximation by a partial sum. Although the derivation of similar results is standard, we have decided to give a full proof because our coefficients $a_n$ take a specific form. 

\begin{lemma} \label{fourierapprox}
For every $W\ge 2$ and $Z\in [-1/2,1/2)$, we have
\begin{equation} \label{fzcutoff}
e(\theta Z)= \sum\limits_{|n|\le W} a_n e(nZ) + O\left(\min\left\{\log W, \frac{1}{W||Z-1/2||}\right\}\right)  
\end{equation}
with an absolute $O$-constant, 
where $||x||$ denotes the distance of $x\in \mathbb{R}$ to the nearest integer.
\end{lemma}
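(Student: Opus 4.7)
The plan is to treat the Lemma as a classical Dirichlet-kernel bound for the Fourier series of $f(Z):=e(\theta Z)$, viewed as a function on $\rear/\intz$ with at most one jump discontinuity at $Z\equiv 1/2\equiv -1/2$. Pointwise convergence on the open interval $(-1/2,1/2)$, where $f$ is smooth, follows from Dirichlet's classical convergence criterion, so one may legitimately write
$$R_W(Z):=e(\theta Z)-\sum_{|n|\le W}a_ne(nZ)=-\sum_{|n|>W}a_ne(nZ),$$
and it remains to estimate this tail in two complementary ways.

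The bound $|R_W(Z)|\ll\log W$ is immediate from $|a_n|\le 1/(\pi|\theta-n|)$ and $|\theta|\le 1/2$: one gets $\sum_{|n|\le W}|a_n|\ll\log W$, hence $|R_W(Z)|\le |e(\theta Z)|+|\sum_{|n|\le W}a_ne(nZ)|\ll\log W$ uniformly in $Z$. This cheap estimate alone already covers the case in which $Z$ is very close to $\pm 1/2$.

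The sharper bound $|R_W(Z)|\ll 1/(W\|Z-1/2\|)$ is the real content. The key manipulation is the identity $e(-n/2)=(-1)^n=e(n/2)$, which rewrites
$$a_n e(nZ)=\frac{\sin(\pi\theta)}{\pi(\theta-n)}\,e\bigl(n(Z-1/2)\bigr),$$
exposing the shift by $1/2$ forced by the jump of $f$ at $Z=1/2$. Setting $x:=Z-1/2$, geometric summation yields $|T_M(x)|:=\bigl|\sum_{n=W+1}^M e(nx)\bigr|\le 1/|\sin(\pi x)|\ll 1/\|x\|$ uniformly in $M\ge W+1$. Combining this with monotonicity of $1/|n-\theta|$ for $n>W\ge 2$ and performing Abel summation gives
$$\left|\sum_{n>W}\frac{e(nx)}{\theta-n}\right|\ll\frac{1}{(W-\theta)\,\|x\|}\ll\frac{1}{W\,\|Z-1/2\|},$$
and an identical argument handles the tail $n<-W$. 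Multiplying by $|\sin(\pi\theta)|/\pi\le 1$ delivers the desired bound on $R_W(Z)$, and the minimum of the two estimates gives the Lemma.

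The main subtlety, rather than a genuine obstacle, is the endpoint $Z=-1/2$ where the statement lives in the closed interval: there the Fourier series converges to the averaged value $\tfrac12(e(\theta/2)+e(-\theta/2))=\cos(\pi\theta)$ instead of $f(-1/2)=e(-\theta/2)$, yielding an $O(1)$ discrepancy; but $\|Z-1/2\|=\|{-1}\|=0$ at that point, so the right-hand side reduces to $\log W$, which comfortably absorbs this $O(1)$ gap. I expect the only part requiring genuine care is recognising the shift $e(-n/2)\,e(nZ)=e(n(Z-1/2))$ that relocates the singular direction to the jump, after which the geometric-series plus Abel-summation routine goes through mechanically.
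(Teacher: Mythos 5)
Your proposal is correct and follows essentially the same route as the paper's proof: the same rewriting $a_ne(nZ)=\frac{\sin(\pi\theta)}{\pi(\theta-n)}e(n(Z-1/2))$, the same geometric/linear exponential-sum bound $\ll 1/\|Z-1/2\|$ combined with Abel (partial) summation for the tails, and the same trivial $\sum_{|n|\le W}|a_n|\ll\log W$ estimate, with the minimum taken at the end. Your explicit remark that at $Z=-1/2$ the series converges to the jump average but the $\log W$ term absorbs the $O(1)$ discrepancy is a point the paper passes over silently (it states the Fourier expansion only on $(-1/2,1/2)$ while the lemma is claimed on $[-1/2,1/2)$), and it is handled correctly in your argument.
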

\begin{proof}
We first prove that the tail of the series satisfies the bound
\begin{equation} \label{tail}
\sum\limits_{|n|> W} a_ne(nZ) \ll \frac{1}{W||Z-1/2||}.  
\end{equation}
To this end, we write
\begin{equation} \label{S1S2split}
\sum\limits_{|n|> W} a_ne(nZ)=\sum\limits_{n> W} a_ne(nZ)+\sum\limits_{n<-W} a_ne(nZ)=S_1+S_2,
\end{equation}
say, and establish the bound 
\begin{equation} \label{S1bound}
S_1\ll \frac{1}{W||Z-1/2||}.
\end{equation}
The same bound for $S_2$ can be established in a similar fashion. 

Recalling \eqref{an}, we have
\begin{equation} \label{S1express}
S_1=\lim\limits_{X\rightarrow \infty} \frac{\sin(\pi \theta)}{\pi }\cdot \sum\limits_{W<n\le X}\frac{e(n(Z-1/2))}{\theta-n}.
\end{equation}
Using partial summation for any $X>W$, we transform the sum on the right-hand side into 
 \begin{equation} \label{partialsummation}
\begin{split}
 \sum\limits_{W<n\le X}\frac{e(n(Z-1/2))}{\theta-n}=& \frac{1}{\theta-X}\sum\limits_{W<n\le X} e(n(Z-1/2))-\\ &
\int_{W}^{X}\frac{\mbox{d}}{\mbox{d}t}\left(\frac{1}{\theta-t}\right)\sum\limits_{W<n\le t} e(n(Z-1/2))\mbox{d}t.
\end{split}
 \end{equation}
From a standard bound for linear exponential sums, we know that
  $$\sum\limits_{W< n\le t}e(n(Z-1/2))\ll \frac{1}{||Z-1/2||}.$$ 
Plugging this into \eqref{partialsummation}, taking $X\rightarrow \infty$ and recalling \eqref{S1express}, we obtain the desired bound \eqref{S1bound}. Bounding $S_2$ similarly and recalling \eqref{S1S2split}, we obtain the bound \eqref{tail}.   

Just using $|e(\theta Z)|=1$, $|a_0|=1$ and $|a_n|\ll 1/|n|$ for $n\not=0$, we trivially get 
\begin{equation} \label{trivially}
e(\theta Z)= \sum\limits_{|n|\le W} a_n e(n Z)+O(\log W).
\end{equation}
Now combining \eqref{fourier}, \eqref{tail} and \eqref{trivially}, we obtain the desired result.
\end{proof} 

Putting \eqref{simply}, \eqref{psin} and \eqref{fzcutoff} together, we have
$$
\left| \frac{\sigma_1(p)}{\sigma_2(p)} \right|^{i\arg(\chi(\epsilon))/(2\log\epsilon)}=
\sum\limits_{|n|\le W} a_n\psi^n(p) + O\left(\min\left\{\log W,\frac{1}{W||Z(p)-1/2||}\right\}\right),
$$
where $W\ge 2$ is a free parameter which we will later fix suitably. Noting \eqref{transformed}, we thus end up with an approximation of the form
\begin{equation} \label{finalinner}
\begin{split}
\sum\limits_{\substack{\mathfrak{p}\in\mathbb{P}(q)\\ N<\mathcal{N}(\mathfrak{p})\le  2N}} \chi_{\text{Hecke}}(\mathfrak{p})\overline{\chi}_\infty(p)=& 
\sum\limits_{|n|\le W} a_n
\sum\limits_{\substack{\mathfrak{p}\in\mathbb{P}(q)\\ N<\mathcal{N}(\mathfrak{p})\le 2N}} (\chi_{\text{Hecke}}\psi_{\text{Hecke}}^n)(\mathfrak{p})+\\ & O\left(\sum\limits_{p\in\mathcal{P}(q)} \min\left\{\log W,\frac{1}{W||Z(p)-1/2||}\right\}\right).
\end{split}
\end{equation}

\subsubsection{Estimation of the error term}
To control the $O$-term in \eqref{finalinner}, we need to obtain information about the spacing of the $Z(p)$'s. Assume from now on that $\mathcal{N}(p)>N\ge d$. Then $Z(p)=0$ iff $p$ is a rational prime, i.e., $p$ is inert in $\mathcal{O}_K$. Now we split $\mathcal{P}(q)$ into three sets
\begin{equation}
\begin{split}
\mathcal{P}_0:= & \mathcal{P}(q)\cap \mathbb{Z},\\
\mathcal{P}_+:= & \{p\in \mathcal{P}(q)\setminus \mathbb{Z} : \sigma_1(p)/\sigma_2(p)>0\},\\
\mathcal{P}_-:= & \{p\in \mathcal{P}(q)\setminus \mathbb{Z} : \sigma_1(p)/\sigma_2(p)<0\}. 
\end{split}
\end{equation}
We observe that
\begin{equation} \label{P0}
\sum\limits_{p\in\mathcal{P}_0} \min\left\{\log W,\frac{1}{W||Z(p)-1/2||}\right\}\ll \frac{N^{1/2}}{W}
\end{equation}
since $\mathcal{N}(p)=p^2$ if $p$ is a rational prime. Moreover, if 
$$
m_{\pm}:=\min\limits_{\substack{p_1,p_2\in \mathcal{P}_{\pm}\\ p_1\not=p_2}} \left| Z(p_1)-Z(p_2)\right|,
$$
then 
\begin{equation} \label{P+-}
\sum\limits_{p\in\mathcal{P}_{\pm}} \min\left\{\log W,\frac{1}{W||Z(p)-1/2||}\right\} \ll \log W+\frac{|\log m_{\pm}|}{Wm_{\pm}}.
\end{equation}

In the following, we work out a lower bound for $m_{+}$. The same lower bound can be obtained for $m_{-}$ in a similar way. Assume  that $p_1,p_2\in \mathcal{P}_+$ and $p_1\not=p_2$. Then we have 
$$
Z(p_1)-Z(p_2)= \frac{1}{2\log \epsilon}\cdot \log \frac{\sigma_1(p_1)\sigma_2(p_2)}{\sigma_2(p_1)\sigma_1(p_2)}. 
$$  
Now we use the bound 
$$
|\log x|\ge c_1|x-1|,
$$
valid for a suitable constant $c_1=c_1(\epsilon)>0$ in the interval $[\epsilon^{-2},\epsilon^2]$. 
This implies 
$$
|Z(p_1)-Z(p_2)|\ge \frac{c_1}{2\log \epsilon}\cdot \frac{|\sigma_1(p_1)\sigma_2(p_2)-\sigma_2(p_1)\sigma_1(p_2)|}{|\sigma_2(p_1)\sigma_1(p_2)|}. 
$$ 

Let $p_i=a_i+b_i\sqrt{d}$ for $i=1,2$, where $a_i,b_i\in \mathbb{Q}$. We calculate that 
$$
\sigma_1(p_1)\sigma_2(p_2)-\sigma_2(p_1)\sigma_1(p_2)=2(b_1a_2-b_2a_1)\sqrt{d}.
$$ 
If this equals 0, then it follows that 
$$
\frac{p_2}{p_1}=\frac{a_2}{a_1}
$$
and hence 
\begin{equation} \label{conc}
\frac{\mathcal{N}(p_2)}{\mathcal{N}(p_1)}=\left(\frac{a_2}{a_1}\right)^2. 
\end{equation}
Since $p_1,p_2$ are not contained in $\mathbb{Z}$ and $[K:\mathbb{Q}]=2$, both $\mathcal{N}(p_1)$ and $\mathcal{N}(p_2)$ are rational primes. These primes are distinct for otherwise, $p_1$ and $p_2$ were associates, which is not the case by construction of $\mathcal{P}(q)$. Therefore, the quotient $\mathcal{N}(p_2)/\mathcal{N}(p_1)$ is not the square of a rational number, contradicting \eqref{conc}. We conclude that 
$$
|b_1a_2-b_2a_1|\ge \frac{1}{4}
$$
and hence
$$
|Z(p_1)-Z(p_2)|\ge \frac{c_1\sqrt{d}}{4|\sigma_2(p_1)\sigma_1(p_2)|\log \epsilon}\ge 
 \frac{c_1\sqrt{d}}{4\mathcal{N}(p_1p_2)^{1/2}\epsilon\log \epsilon}
\ge \frac{c_1\sqrt{d}}{8N\epsilon \log \epsilon}.
$$
It follows that
$$
m_+\ge \frac{c_1\sqrt{d}}{8N\epsilon \log \epsilon}.
$$
Performing analogous calculations for $m_-$ and using \eqref{P0} and \eqref{P+-}, we obtain the following result.

\begin{lemma} \label{errorbound} Assume that $W\ge 2$. Then
$$
\sum\limits_{p\in\mathcal{P}(q)} \min\left\{\log W,\frac{1}{W||Z(p)-1/2||}\right\} \ll \log W+ \frac{N\log N}{W}.
$$
\end{lemma}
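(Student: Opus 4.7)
My plan is to combine the three estimates already prepared above. Using the decomposition $\mathcal{P}(q)=\mathcal{P}_0\cup\mathcal{P}_+\cup\mathcal{P}_-$, the sum splits into three pieces. For $\mathcal{P}_0$, the bound \eqref{P0} gives the contribution $O(N^{1/2}/W)$, which is comfortably absorbed into the target $O(N(\log N)/W)$. For $\mathcal{P}_{\pm}$, I invoke \eqref{P+-}, reducing matters to controlling $|\log m_{\pm}|/(W m_{\pm})$.

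The crux of the argument is the spacing lower bound, which has already been derived above for $m_+$:
$$m_+\ge \frac{c_1\sqrt{d}}{8N\epsilon\log\epsilon}\gg_K \frac{1}{N}.$$
This immediately yields $1/m_+\ll_K N$ and $|\log m_+|\ll \log N$, hence $|\log m_+|/(W m_+)\ll N(\log N)/W$. The same computation transfers to $\mathcal{P}_-$ without essential change: the key input was the inequality $|b_1 a_2 - b_2 a_1|\ge 1/4$ for distinct $p_1,p_2 \in \mathcal{P}_+$ with $p_i=a_i+b_i\sqrt{d}$, which used only that $\mathcal{N}(p_1)$ and $\mathcal{N}(p_2)$ are rational primes and that distinct elements of $\mathcal{P}(q)$ are non-associate. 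Both facts hold equally for $\mathcal{P}_-$, yielding $m_-\gg_K 1/N$.

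Plugging these bounds into \eqref{P+-} shows that each of $\mathcal{P}_+$ and $\mathcal{P}_-$ contributes $O(\log W + N(\log N)/W)$, and summing with the $\mathcal{P}_0$ piece produces the claimed inequality. The main obstacle in a direct attack would have been establishing the spacing lower bound on $m_{\pm}$ — in particular ruling out near-collisions of the $Z(p)$ values coming from distinct prime elements — but this has already been handled in the excerpt, so the lemma follows essentially by assembly.
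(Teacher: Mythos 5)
Your proof is correct and is essentially the paper's own argument: the paper establishes Lemma \ref{errorbound} precisely by combining \eqref{P0}, \eqref{P+-}, and the spacing bound $m_{\pm}\gg_K 1/N$ (with the $m_-$ case handled, as you note, by the same computation as $m_+$). Nothing is missing, and no different route is taken.
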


\subsubsection{Completion of the proof}
Combining Lemma \ref{errorbound} and \eqref{finalinner}, we get 
\begin{equation} \label{almostthere}
    \sum\limits_{\substack{\mathfrak{p}\in\mathbb{P}(q)\\ N<\mathcal{N}(\mathfrak{p})\le 2N}} \chi_{\text{Hecke}}(\mathfrak{p})\overline{\chi}_\infty(p)=\sum\limits_{|n|\le W} a_n
\sum\limits_{\substack{\mathfrak{p}\in\mathbb{P}(q)\\ N<\mathcal{N}(\mathfrak{p})\le 2N}} (\chi_{\text{Hecke}}\psi_{\text{Hecke}}^n)(\mathfrak{p})+O\left(\log W+ \frac{N\log N}{W}\right).
\end{equation}
Note that $a_n$ depends on the Dirichlet character $\chi \bmod{q}$. When $\chi$ is principal, then so is $\chi_{\text{Hecke}}$, by our choice $u_2(\chi)=0=n(\chi)$. In this case, we have $\theta=0$ and hence 
$$
a_n=\begin{cases} 1 & \mbox{ if } n=1, \\ 0 & \mbox{ otherwise.}\end{cases}
$$
Hence, using Corollary \ref{GRHcharsum}, \eqref{almostthere}, $|a_0|\le 1$ and $|a_n|\ll 1/|n|$ if $n\not=0$, we obtain
\begin{equation*}
    \sum\limits_{\substack{\mathfrak{p}\in\mathbb{P}(q)\\ N<\mathcal{N}(\mathfrak{p})\le 2N}} \chi_{\text{Hecke}}(\mathfrak{p})\overline{\chi}_\infty(p)= \delta(\chi) \int\limits_{N}^{2N}\frac{\mbox{d}t}{\log t}+O\left(N^{1/2}(\log N)(\log W)+\frac{N\log N}{W}\right)
\end{equation*}
under GRH, where 
$$
\delta(\chi)=\begin{cases} 1 & \mbox{ if } \chi \mbox{ is principal,}\\
0 & \mbox{ otherwise.} \end{cases}
$$
Choosing $W:=N^{1/2}$, it follows that  
\begin{equation*}
     \sum\limits_{\substack{\mathfrak{p}\in\mathbb{P}(q)\\ N<\mathcal{N}(\mathfrak{p})\le 2N}} \chi_{\text{Hecke}}(\mathfrak{p})\overline{\chi}_\infty(p)=\delta(\chi)\int\limits_{N}^{2N}\frac{\mbox{d}t}{\log t}+O\left(N^{1/2}\log^2 N\right).
\end{equation*}
Combining this with \eqref{T ChiInfty}, we get 
 \begin{equation} \label{Sprestep}
 \begin{split} 
S=\frac{1}{\varphi(q)}\cdot \int\limits_{N}^{2N}\frac{\mbox{d}t}{\log t}\cdot\sum\limits_{k\in \mathcal{S}(q)} 1+O\left(\frac{N^{1/2}\log^2 N}{\varphi(q)}\sum\limits_{\chi \bmod{q}}\left|\sum\limits_{k\in \mathcal{S}(q)} \chi(ak^{-1})\right|\right).
\end{split}
\end{equation}

Using the Cauchy-Schwarz inequality and orthogonality relations for Dirichlet characters, we have
\begin{equation}\label{errortermfinal}
    \begin{split}
\sum\limits_{\chi \bmod{q}}\left|\sum\limits_{k\in \mathcal{S}(q)} \chi(ak^{-1})\right| = &
    \sum\limits_{\chi \bmod q}\left|\sum\limits_{k\in \mathcal{S}(q)}\overline{\chi}(k)\right|\\ \le& \varphi(q)^{1/2}\left(\sum\limits_{\chi \bmod q}\left|\sum\limits_{k\in \mathcal{S}(q)}\overline{\chi}(k)\right|^2\right)^{1/2}\\=&\varphi(q)^{1/2}\left(\sum\limits_{\chi \bmod q}\ \sum\limits_{k_1,k_2\in \mathcal{S}(q)}\overline{\chi}(k_1)\chi(k_2)\right)^{1/2}
    \\=&\varphi(q)\left(\sum\limits_{\substack{k_1,k_2\in\mathcal{S}(q)\\ k_1\equiv k_2 \bmod q\\}}1\right)^{1/2}.
    \end{split}
\end{equation}
We claim that $k_1,k_2\in \mathcal{S}(q)$ and $k_1\equiv k_2 \bmod q$ imply $k_1=k_2$.
To see this, assume that $k_1,k_2\in \mathcal{S}(q)$, $k_1\equiv k_2 \bmod{q}$ and $k_1\neq k_2$. Since $q|(k_1-k_2)$, $\mathcal{N}(q)|\mathcal{N}(k_1-k_2)$ and so $\mathcal{N}(q)\le \mathcal{N}(k_1-k_2).$ Let $k_i=a_i+b_i\sqrt{d}$ for $i=1,2$. Then $\mathcal{N}(k_1-k_2)=(a_1-a_2)^2-(b_1-b_2)^2d.$ 
If $k_1,k_2\in \mathcal{S}(q)$, then $a_i,b_i\ll\mathcal{N}(k_i)^{1/2}$ for $i=1,2$, where we recall that $|\sigma_1(k_i)\sigma_2(k_i)|=\mathcal{N}(k_i)$. Hence $\mathcal{N}(k_1-k_2)\ll \mathcal{N}(k_1)+\mathcal{N}(k_2)\ll Y_0^2.$
However, 
\begin{equation} \label{Y0ref}
Y_0^2\asymp \frac{\mathcal{N}(p)}{\mathcal{N}(q)}\asymp \frac{N}{\mathcal{N}(q)}
\end{equation}  
in view of \eqref{Deltadef} and \eqref{Y0def}, and 
$$
N = o\left(\mathcal{N}(q)^2\right)
$$ 
by \eqref{Ndef} and our assumption $\tau<2$. So altogether, we obtain $\mathcal{N}(q)\le \mathcal{N}(k_1-k_2)= o\left(\mathcal{N}(q)\right)$ as $\mathcal{N}(q)\rightarrow \infty$. This gives a contradiction if $\mathcal{N}(q)$ is sufficiently large. Hence $k_1=k_2$. 
    
The above claim implies that 
\begin{equation} \label{impli}
\sum\limits_{\substack{k_1\equiv k_2 \bmod q\\k_1,k_2\in\mathcal{S}(q)}}1=\sum\limits_{k\in \mathcal{S}(q)} 1.
\end{equation}
Again using $|\sigma_1(k)\sigma_2(k)|=\mathcal{N}(k)$, there exists a constant $c_2>0$ such that $\mathcal{N}(k)\le c_2Y_0^2$ for every $k\in \mathcal{S}(q)$. Moreover, by Lemma \ref{sigma12} and construction of $\mathcal{S}(q)$, for every integral ideal $\mathfrak{a}$ with $\mathcal{N}(\mathfrak{a})\le c_2Y_0^2$, there exist at most two $k\in \mathcal{S}(q)$ such that $(k)=\mathfrak{a}$. It follows that 
\begin{equation}\label{s(c)1}
    \sum\limits_{k\in\mathcal{S}(q)}1 \le 2\sum\limits_{\substack{\mathfrak{a} \text{ integral ideal}\\ \mathcal{N}(\mathfrak{a})\le c_2Y_0^2 }} 1.
\end{equation}
Furthermore, Theorem \ref{Landau} implies that 
\begin{equation} \label{s(c)2}
\sum\limits_{\substack{\mathfrak{a} \text{ integral ideal}\\ \mathcal{N}(\mathfrak{a})\le c_2Y_0^2 }} 1 = A_Kc_2Y_0^2+O\left(Y_0^{2/3}\right)
\end{equation}
for some constant $A_K$ depending only on the number field $K$. Combining \eqref{errortermfinal}, \eqref{impli}, \eqref{s(c)1} and \eqref{s(c)2}, we obtain
\begin{equation}\label{errorfinalfinal}
    \sum\limits_{\chi \bmod q}\left|\sum\limits_{k\in \mathcal{S}(q)}\chi(ak^{-1})\right| =O\left(\varphi(q)Y_0\right).
\end{equation} 

Above, we derived an upper bound for the sum
$$
\sum\limits_{k\in \mathcal{S}(q)} 1.
$$
We also need a lower bound since this sum appears in the main term on the right-hand side of \eqref{Sprestep}. Again using $|\sigma_1(k)\sigma_2(k)|=\mathcal{N}(k)$, we observe that $|k|\le Y_0$ holds if $\epsilon^{-1}|\sigma_1(k)|<|\sigma_2(k)|\le \epsilon|\sigma_1(k)|$ and $\mathcal{N}(k)\le c_3Y_0^2$ for a suitable constant $c_3>0$. Moreover, by Lemma \ref{sigma12}, for every integral ideal $\mathfrak{a}$, there exists at least one generator $k$ of $\mathfrak{a}$ satisfying $\epsilon^{-1}|\sigma_1(k)|<|\sigma_2(k)|\le \epsilon|\sigma_1(k)|$. It follows that 
\begin{equation} \label{lowerbound}
        \sum\limits_{k\in\mathcal{S}(q)}1\ge  \sum\limits_{\substack{\mathcal{N}(\mathfrak{a})\le c_3Y_0^2 \\(\mathfrak{a},(q))=(1)}}1.
\end{equation}
Detecting the coprimality condition above using the M\"obius function and using Theorem \ref{Landau} and \eqref{relation1} and \eqref{relation2}, we deduce that
\begin{equation} \label{Tfinal}
\begin{split}
\sum\limits_{\substack{\mathcal{N}(\mathfrak{a})\le c_3Y_0^2 \\(\mathfrak{a},(q))=(1)}}1 =&\sum\limits_{\mathfrak{d}|(q)}\mu(\mathfrak{d})\sum\limits_{\substack{\mathcal{N}(\mathfrak{a})\le c_3Y_0^2 \\ \mathfrak{d}|\mathfrak{a}}}1\\=&\sum\limits_{\mathfrak{d}|(q)}\mu(\mathfrak{d})\sum\limits_{\substack{\mathcal{N}(\mathfrak{b})\le c_3Y_0^2/\mathcal{N}(\mathfrak{d})}}1\\=&\sum\limits_{\mathfrak{d}|(q)}\mu(\mathfrak{d})\left(A_Kc_3\cdot \frac{Y_0^2}{\mathcal{N}(\mathfrak{d})}+O\left(Y_0^{2/3}\mathcal{N}(\mathfrak{d})^{-1/3}\right)\right)\\
=& A_Kc_3 Y_0^2\cdot\frac{\varphi(q)}{\mathcal{N}(q)}+O\left(Y_0^{2/3} \mathcal{N}(q)^{\varepsilon}\right).
\end{split}
\end{equation}

Now combining \eqref{Sprestep}, \eqref{errorfinalfinal} and \eqref{Tfinal}, and using \eqref{relation2} again, we get
\begin{equation*} 
      S\ge \frac{A_Kc_3Y_0^2}{\mathcal{N}(q)}\cdot \int\limits_{N}^{2N}\frac{\mbox{d}t}{\log t}+O\left(N Y_0^{2/3}\mathcal{N}(q)^{-1+2\varepsilon}+Y_0N^{1/2+\varepsilon}\right).
\end{equation*}
Using \eqref{Y0ref}, this implies
\begin{equation} \label{finish} 
      S\ge \frac{A_Kc_4N}{\mathcal{N}(q)^2}\cdot \int\limits_{N}^{2N}\frac{\mbox{d}t}{\log t}+O\left(N^{4/3}\mathcal{N}(q)^{-4/3+2\varepsilon}+N^{1+\varepsilon}\mathcal{N}(q)^{-1/2}\right)
\end{equation}
for a suitable constant $c_4>0$. We recall our choice $\tau:=1/(2/3-\varepsilon)$ in \eqref{choice}, which implies $N=\mathcal{N}(q)^{1/(2/3-\varepsilon)}$ by \eqref{Ndef}. Equivalently, $\mathcal{N}(q)=N^{2/3-\varepsilon}$. With this choice, we obtain \eqref{thatiswhatwewant} from \eqref{finish}, provided that $\varepsilon$ is small enough and $N$ is sufficiently large. This completes the proof of Theorem \ref{secondmainresult} in the case when $h_K=1$. 

\subsection{The case of an arbitrary class number $h_K$} The modifications needed when the class number is not equal to 1 are very similar to those in subsection \ref{arb}. Therefore, we confine ourselves to a brief description and omit a number of calculations in this concluding part. First, we need a sharpened version of the Dirichlet approximation theorem for real quadratic fields, similar to that for imaginary quadratic fields in Theorem \ref{Dirichlet3}.

\begin{theorem} \label{Dirichlet4}  
Let $K$ be a real quadratic number field. Then there exists a positive constant $C$ depending at most on $K$ such that for every pair $(x_1, x_2) \in \mathbb{R}^2 \setminus \sigma(K)$, there exists an infinite sequence of pairs  $(a,q)\in\mathcal{O}_K\times(\mathcal{O}_K\setminus\{0 \})$ such that $\mathcal{N}(\mbox{gcd}((a),(q))^{-1}(q))$ increases to infinity and   
\begin{equation} \label{Dirichl} 
\left|x_i-\frac{\sigma_i(a)}{\sigma_i(q)}\right| \le \frac{C}{\mathcal{N}(q)} \quad \mbox{ for } i=1,2. 
\end{equation} 
\end{theorem}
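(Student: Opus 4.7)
The plan is to mirror the proof of Theorem \ref{Dirichlet3} in the imaginary quadratic case, adapting the triangle-inequality argument to the two-embedding setting afforded by Theorem \ref{Dirichlet1}.

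First, I would invoke Theorem \ref{Dirichlet1} to obtain an infinite sequence of pairs $(a_n,q_n)\in \mathcal{O}_K\times (\mathcal{O}_K\setminus\{0\})$ with $\mathcal{N}(q_n)$ strictly increasing and
$$
\left|x_i-\frac{\sigma_i(a_n)}{\sigma_i(q_n)}\right|\le \frac{C}{\mathcal{N}(q_n)} \quad \mbox{for } i=1,2.
$$
Applying the triangle inequality to consecutive pairs under each embedding, I obtain
$$
\left|\frac{\sigma_i(a_n)}{\sigma_i(q_n)}-\frac{\sigma_i(a_{n+1})}{\sigma_i(q_{n+1})}\right|\le \frac{2C}{\mathcal{N}(q_n)},
$$
which after multiplying by $|\sigma_i(q_nq_{n+1})|$ yields
$$
|\sigma_i(a_nq_{n+1}-a_{n+1}q_n)|\le \frac{2C|\sigma_i(q_nq_{n+1})|}{\mathcal{N}(q_n)} \quad \mbox{for } i=1,2.
$$

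Next, I would take the product over $i=1,2$, using $\mathcal{N}(\alpha)=|\sigma_1(\alpha)\sigma_2(\alpha)|$ for any $\alpha \in K$. This gives
$$
\mathcal{N}(a_nq_{n+1}-a_{n+1}q_n)\le \frac{4C^2\,\mathcal{N}(q_nq_{n+1})}{\mathcal{N}(q_n)^2}=\frac{4C^2\,\mathcal{N}(q_{n+1})}{\mathcal{N}(q_n)}.
$$
Since $\gcd((a_{n+1}),(q_{n+1}))$ divides both $(a_{n+1})$ and $(q_{n+1})$, it also divides the principal ideal $(a_nq_{n+1}-a_{n+1}q_n)$, hence
$$
\mathcal{N}\left(\gcd((a_{n+1}),(q_{n+1}))\right)\le \mathcal{N}(a_nq_{n+1}-a_{n+1}q_n)\le \frac{4C^2\,\mathcal{N}(q_{n+1})}{\mathcal{N}(q_n)}.
$$
Dividing $\mathcal{N}(q_{n+1})$ through, this rearranges to
$$
\mathcal{N}\left(\gcd((a_{n+1}),(q_{n+1}))^{-1}(q_{n+1})\right)\ge \frac{\mathcal{N}(q_n)}{4C^2},
$$
and the right-hand side tends to infinity since $\mathcal{N}(q_n)$ does.

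The one subtlety to watch is the possibility that $a_nq_{n+1}-a_{n+1}q_n=0$ for infinitely many $n$, which would collapse the divisibility argument. However, in that case $a_n/q_n=a_{n+1}/q_{n+1}$ as an element of $K$, and the approximation bounds together with $\mathcal{N}(q_n)\to\infty$ would force $(x_1,x_2)=(\sigma_1(a_n/q_n),\sigma_2(a_n/q_n))\in \sigma(K)$, contradicting our hypothesis. Therefore, after passing to a subsequence along which these differences are non-zero, the argument goes through and the desired sharpened approximation is established. The only real obstacle beyond bookkeeping is this non-vanishing step, and it is resolved cleanly by the assumption $(x_1,x_2)\notin \sigma(K)$.
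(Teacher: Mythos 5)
Your argument is correct and takes essentially the same route as the paper's own proof: invoke Theorem \ref{Dirichlet1}, apply the triangle inequality to consecutive approximants in both embeddings, multiply the two resulting inequalities to bound $\mathcal{N}(a_nq_{n+1}-a_{n+1}q_n)$ by $(2C)^2\mathcal{N}(q_{n+1})/\mathcal{N}(q_n)$, and use that $\gcd((a_{n+1}),(q_{n+1}))$ divides this difference. Your additional treatment of the degenerate case $a_nq_{n+1}-a_{n+1}q_n=0$ (excluded via the hypothesis $(x_1,x_2)\notin\sigma(K)$ after passing to a subsequence) addresses a point the paper leaves implicit and is a sensible refinement.
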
 
\begin{proof} 
Using Theorem \ref{Dirichlet1}, there exists a sequence of pairs $(a_n,q_n)\in\mathcal{O}_K\times(\mathcal{O}_K\setminus\{0 \})$ satisfying the inequalities \eqref{Dirichl} and $\mathcal{N}(q_n)<\mathcal{N}(q_{n+1})$ for all $n\in \mathbb{N}.$ Plugging the pairs $(a,q)=(a_n,q_n), (a_{n+1},q_{n+1})$ into \eqref{Dirichl} and applying the triangle inequality, we deduce that 
\begin{equation*}
\left|\frac{\sigma_i(a_n)}{\sigma_i(q_n)}-\frac{\sigma_i(a_{n+1})}{\sigma_i(q_{n+1})}\right| < \frac{2C}{\mathcal{N}(q_n)} \quad \mbox{ for } i=1,2,
\end{equation*} 
and hence 
\begin{equation*}
\left|\sigma_i(a_nq_{n+1}-a_{n+1}q_n)\right|<2C\cdot \frac{|\sigma_i(q_n)\sigma_i(q_{n+1})|}{\mathcal{N}(q_n)}  \quad \mbox{ for } i=1,2.
\end{equation*}
Multiplying the above inequality for $i=1$ with the same inequality for $i=2$, we obtain 
$$
\mathcal{N}(a_nq_{n+1}-a_{n+1}q_n)<(2C)^2 \cdot \frac{\mathcal{N}(q_{n+1})}{\mathcal{N}(q_n)}.
$$
Since 
\begin{equation*}
\mathcal{N}\left(\gcd((a_{n+1}),(q_{n+1}))\right)\le\mathcal{N}(a_nq_{n+1}-a_{n+1}q_n),
\end{equation*}
it follows that 
\begin{equation*}
\mathcal{N}\left(\gcd((a_{n+1}),(q_{n+1}))^{-1}(q_{n+1})\right)>(2C)^{-2}\mathcal{N}(q_n).
\end{equation*}
This shows that the sequence of $\mathcal{N}\left(\gcd((a_{n+1}),(q_{n+1}))^{-1}(q_{n+1})\right)$ diverges to infinity, which completes the proof.
\end{proof}   
Our analysis for the case of an arbitrary class number $h_K$ begins as in the previous subsection, but led by similar considerations as in subsection \ref{arb}, we replace the definition of $\Delta$ in \eqref{Deltadef} by
$$
\Delta:=\frac{C}{\mathcal{N}(\mathfrak{q})}
$$
(which we are free to do since $C/\mathcal{N}(\mathfrak{q})\ge C/\mathcal{N}(q)$),  
with the effect that now 
$$
Y_0^2\asymp \frac{N \mathcal{N}(q)}{\mathcal{N}(\mathfrak{q})^2}
$$
in place of \eqref{Y0ref}. Moreover, similarly as in subsection \ref{arb}, it is required to replace the condition $p\nmid q$ by $(p)\nmid \mathfrak{q}$ and the coprimality condition $(k,q)\approx 1$ by $\mathfrak{C}=\mathfrak{D}$, where we set 
$$
\mathfrak{C}:=\mbox{gcd}((k),(q)), \quad \mathfrak{D}:=\mbox{gcd}((a),(q))\quad \mbox{and} \quad \mathfrak{q}:=\mathfrak{D}^{-1}(q). 
$$
Consequently, the sets $\mathcal{S}(q)$ and $\mathcal{P}(q)$, previously defined in \eqref{thedefofSq} and \eqref{thedefofPq}, will be modified into
\begin{equation*}
\mathcal{S}(q):=\left\{k\in \mathcal{O}_K : \epsilon^{-1}|\sigma_1(k)|< |\sigma_2(k)|\le \epsilon |\sigma_1(k)|, \ |k|\le Y_0,  \ \mathfrak{C}=\mathfrak{D}\right\}
\end{equation*}
and 
\begin{equation*} 
\mathcal{P}(q):=\{p\in \mathcal{O}_K \mbox{ prime element}:   p>0, \ (p)\nmid \mathfrak{q}, \ N<\mathcal{N}(p)\le 2N, \ \epsilon^{-1}|\sigma_1(p)|< |\sigma_2(p)| \le \epsilon|\sigma_1(p)|\}.
\end{equation*}
Further, in the sums over $\mathfrak{p}$ in \eqref{T ChiInfty} and the following expressions, we replace the set $\mathbb{P}(q)$ by the set $\mathbb{P}_0(q)$ of 
all non-zero {\it principal} prime ideals not dividing $\mathfrak{q}$. The transformations of the said sums over $\mathfrak{p}$ are as in subsection \ref{hK1new}, but after reaching \eqref{almostthere} and before applying Corollary \ref{GRHcharsum}, we detect the principality of $\mathfrak{p}$ using class group characters $\psi$. As in subsection \ref{arb}, this creates harmless additional factors of $\psi(\mathfrak{p})$ which don't change the subsequent calculations except that an additional factor of $1/h_K$ is being introduced in the main terms.  In place of \eqref{Sprestep}, we derive the asymptotic formula
 \begin{equation*}
      S=  \frac{1}{h_k\varphi(\mathfrak{q})} \cdot \int\limits_{N}^{2N} \frac{\mbox{d}t}{\log t} \cdot  \sum\limits_{k\in \mathcal{S}(q)} 1 +
O\left(\frac{N\log^2 N }{\varphi(\mathfrak{q})}\cdot \sum\limits_{\chi \bmod{\mathfrak{q}}}
\left| \sum\limits_{k\in \mathcal{S}(q)} \chi(ak^{-1}) \right|\right),
  \end{equation*}
which should be compared to \eqref{SS2} in subsection \ref{arb}. Here we define $S$ as in \eqref{thatiswhatwewant}. To bound the $O$-term from above and the main term from below, generalizing the corresponding calculations in the previous subsection, we now need to incorporate a few extra arguments taking care of the condition $\mathfrak{C}=\mathfrak{D}$ in our redefined $S(q)$ and the principality of the ideal $(k)$, similarly as we did in subsection \ref{arb}. Eventually, we arrive at the same lower bound as in \eqref{finish}, with the term $\mathcal{N}(q)$ replaced by $\mathcal{N}(\mathfrak{q})$ at all relevant places. Here we note that although the parameter $Y_0$ depends on $\mathcal{N}(q)$, the final estimate depends only on $\mathcal{N}(\mathfrak{q})$. This is a result of the condition $\mathfrak{C}=\mathfrak{D}$ in place of $(k,q)\approx 1$, which lowers the sum $\sum\limits_{k\in \mathcal{S}(q)} 1$ essentially by a factor of $1/\mathcal{N}(\mathfrak{D})$. The final choice of $N$ will now be $N=\mathcal{N}(\mathfrak{q})^{1/(2/3-\varepsilon)}$, and as in subsection \ref{hK1new}, we obtain a non-trivial lower bound for the sum $S$. This completes the proof in the case of an arbitrary class number $h_K$.

\end{document}